\newtheorem{thm}{Theorem}[section]
\newtheorem{cor}[thm]{Corollary}
\newtheorem{lem}[thm]{Lemma}
\newtheorem{prop}[thm]{Proposition}
\theoremstyle{definition}
\theoremstyle{remark}
\newtheorem{rema}[thm]{Remark}
\title
{K\"ahler structures on $\T^* \GG$ having as underlying symplectic form the standard one}
\author{
J.~Huebschmann and K. Leicht
\\[0.3cm]
 USTL, UFR de Math\'ematiques\\
CNRS-UMR 8524
\\
Labex CEMPI (ANR-11-LABX-0007-01)
\\
59655 Villeneuve d'Ascq Cedex, France\\
Johannes.Huebschmann@math.univ-lille1.fr
 }
\numberwithin{equation}{section}
\def\prin{\xi}
\def\form{\mathcal A}
\def\GG{G}
\def\gg{\mathfrak g}
\def\ggg{\overline{\mathfrak g}}
\def\bra{[\,\cdot\, , \,\cdot\, ]}
\def\oo{\omega}
\def\ppsi{c}
\def\llambda{s}
\def\qqq{a}
\def\ttt{\mathfrak t}
\def\PGG{P}
\def\rg{g}
\def\bbb{\beta}
\newcommand{\T}{\mathrm{T}}
\newcommand{\mathscr}{\mathcal}
\long
\def\MSC#1\EndMSC{\def\arg{#1}\ifx\arg\empty\relax\else
      {\par\narrower\noindent
      2010 Mathematics Subject Classification: #1\par}\fi}
\long
\def\KEY#1\EndKEY{\def\arg{#1}\ifx\arg\empty\relax\else
    {\par\narrower\noindent
      Keywords and Phrases: #1\par}\fi}
\begin{document}

\maketitle
\begin{abstract} For a connected Lie group $G$,
we show that a complex structure
on the total space $\T G$ of the tangent bundle
of $G$ that
is left invariant and has the property
that
each left translation $\GG$-orbit is a totally real
submanifold
is induced from a smooth immersion of $\T G$ into
the complexification $G^{\mathbb C}$ of $G$.
For $G$
compact and connected,
we then characterize
left invariant and biinvariant complex
structures on the total space $\T^*\GG$
of the cotangent bundle of $\GG$
which combine with the 
tautological symplectic structure
to a K\"ahler structure.
\end{abstract}

\MSC 53C55 (32Q15 53D20)
\EndMSC

\KEY Invariant K\"ahler structure, cotangent bundle, Lie group
\EndKEY

\tableofcontents

\section{Introduction}

In \cite{MR2472033}, the first named author has developed,
in collaboration with two physicists, 
a gauge model for quantum mechanics on a stratified space.
The underlying unreduced phase space is the total space
$\T^*\GG$ of the cotangent bundle of a compact connected Lie group
$\GG$,  endowed with the tautological symplectic structure,
and the reduced phase space
is the singular symplectic quotient of 
$\T^*\GG$ with respect to conjugation.
The standard identification of 
$\T^*\GG$ with the complexification $\GG^{\mathbb C}$
of $\GG$ via a choice of invariant inner product on the Lie algebra
$\gg$ of $\GG$ and the standard polar decomposition map
from $\T \GG \cong \GG \times \gg$ to
$\GG^{\mathbb C}$ turns
$\T^*\GG$ into a K\"ahler manifold, the K\"ahler structure
being $\GG$-biinvariant.
We refer to this structure as the {\em standard structure\/}.
At the reduced level, the gauge model 
in \cite{MR2472033} is built on the associated 
singular K\"ahler quotient.
The complex structure on
$\T^*\GG$ resulting from the identification
with $\GG^{\mathbb C}$ has no interpretation in physics,
and the question arises as to what extent 
the physical interpretation depends on the choice of
complex structure. To attack this question,
as a preliminary step,
in the present paper,
we classify all left invariant and all biinvariant complex
structures on $\T^*\GG$
which combine with the 
tautological symplectic structure
to a K\"ahler structure.
To this end, we elaborate on an approach in
\cite{MR1989647} aimed at describing
 K\"ahler structures on a space of the kind $\T^*\GG$ 
and at exploring their Ricci curvatures.
In a sense, we globalize some of the results
in \cite{MR1989647}.
More precisely, we show that,
given a connection 1-form and a horizontal 1-form
as in Proposition 3.1 of  \cite{MR1989647},
when these forms satisfy the integrability conditions
spelled out in that Proposition and hence determine
a complex structure on $P=G\times \gg\cong \T G$,
this complex structure on $P$ is actually induced from a
smooth immersion $P \to \GG^{\mathbb C}$.
A precise statement is given as Theorem \ref{fundth24} below.
In Theorem \ref{fundth241}
we will, furthermore, give a criterion which characterizes those
complex structures $J$ on $P$ which are $G$-biinvariant.
In Theorem \ref{kahlertco},
for $G$ compact and connected,
we then single out those complex structures on $\T ^* \GG$
which combine with the tautological symplectic structure to
a K\"ahler structure.
In Subsection \ref{nontrivial}, we 
illustrate our approach with a class of examples
more general than the standard structure
on $\T ^* \GG$.

This paper is based on the second-named author's doctoral dissertation
to be submitted in partial fulfillment of the requirements for the 
PhD degree at the university Lille 1. 

\section {Gauge theory with structure group acting from the left}
\label{gaugeright}
In the standard setup, cf. e.~g.  \cite{MR0152974},
the structure group acts on the total space 
of a principal bundle from the right.
Below we will work with  principal bundles 
having structure group
acting from the left.
For ease of exposition, and to introduce notation, we briefly 
explain the requisite formalism.

Let $\GG$ be a Lie group and $\gg$  its Lie algebra of left invariant
vector fields, the Lie bracket being written as $\bra$.
We denote the Lie algebra of
right invariant vector fields on $\GG$
by $\ggg$,
with Lie bracket 
$\bra^{-} 
\colon 
\ggg \times \ggg \to
\ggg$.
When we identify $\gg$ with $\ggg$ as vector spaces
via the canonical identifications with the tangent space $\T_e\GG$
to $\GG$ at the identity element $e$ of $\GG$,
the bracket $\bra^{-}$ gets identified with the negative of $\bra$.

Given a smooth manifold $M$ and a vector space $V$,
let $\mathcal A(M, V)$
denote the graded vector space of
$V$-valued differential forms
on $M$. We denote the de Rham operator by $d$.
Let $\prin \colon P \to M$ be a principal $\GG$-bundle
having the structure group $\GG$ acting on $P$ from the left,
let $V$ be a right $\GG$-module,
write $\mathcal A_{\mathrm{basic}}(P, V)\subseteq  \mathcal A(P, V) $
for the graded vector space of basic $V$-valued
differential forms on $P$ and, with an abuse of notation,
write the induced infinitesimal
$\ggg$-action on $V$ 
 from (beware) the left
as
$\bra^{-} 
\colon 
\ggg \times V \to
V$.
This infinitesimal action induces
the pairing
\begin{equation}
[\, \cdot\, ,\, \cdot\, ]^{-}\colon
\mathcal A(P, \ggg) \times \mathcal A(P, V)
\longrightarrow
\mathcal A(P, V).
\label{rpair}
\end{equation}

The pairing \eqref{rpair}
is, in particular, defined on $\mathcal A(\GG, \ggg)$, and
the right invariant Maurer-Cartan form 
$\overline \omega_{\GG}\colon  \T \GG \to \ggg$
of $\GG$ 
satisfies
the {\em Maurer-Cartan equation\/}
or {\em structure equation\/} 
\begin{equation}
d\overline \omega_\GG+
\tfrac 12 [\overline \omega_\GG,\overline \omega_\GG]^{-}=0. 
\label{MCr2}
\end{equation}
A {\em connection form\/} for $\prin$
is a $\ggg$-valued 1-form $\theta\colon \T P \to \ggg$
which, on the vertical part of $\T P$, restricts to the obvious
extension of the  right invariant Maurer-Cartan form 
$\overline \omega_{\GG}$ and which is $\GG$-equivariant in the sense that
\[
\theta (xY)=\mathrm{Ad}_x(Y),
\]
for any tangent vector $Y$ to $P$ and $x \in \GG$.
The {\em curvature\/} of $\theta$ is then given by the (familiar) expression
\begin{equation}
d\theta +
\tfrac 12 [\theta,\theta]^{-} \in \mathcal A^2(P, \ggg),
\label{rcurva}
\end{equation} 
necessarily a basic  $\ggg$-valued 2-form on $P$.
On the basic forms $\mathcal A_{\mathrm{basic}}(P, V)$, 
the operator
$d^\theta$ 
of covariant derivative  
is given by
\begin{equation}
\label{cova} 
d^\theta = d + [\theta, \, \cdot\,]^{-}
\colon 
\mathcal A_{\mathrm{basic}}(P, V)
\longrightarrow
\mathcal A_{\mathrm{basic}}(P, V),
\end{equation}
where the notation $[\, \cdot\, ,\, \cdot\, ]^{-}$ 
is slightly abused.
Notice that the values of the sum
$d + [\theta, \, \cdot\,]^{-}$ 
(restricted to $\mathcal A_{\mathrm{basic}}(P, V)$)
lie
in $\mathcal A_{\mathrm{basic}}(P, V)$
but not necessarily the values of the individual
operators $d$ or 
$ [\theta, \, \cdot\,]^{-}$.

\section{Invariant complex structures on the total space of the tangent bundle of a Lie group}

\subsection{Left invariant complex structures}
\label{lefti}
We write the tangent bundle 
of a smooth manifold $M$
as
$\tau_M\colon \T M \to M$.  
Let $G$ be a connected Lie group.
Henceforth the terms {\em left translation\/}
and {\em right translation\/}
mean left translation
and right translation, respectively, with respect to
members of $G$.
We will say that a complex structure $J$ on the total space
$\T G$ of the tangent bundle of $G$
is {\em admissible\/}
when $J$ is left invariant and when
each left translation $\GG$-orbit is a totally real
submanifold.
Our aim is to explore such admissible complex structures.

Let 
$G^{\mathbb C}$ be the complexification of $G$ 
\cite{MR0206141};
we denote the image of $G$ in $G^{\mathbb C}$ by $\overline G$.
When $G$ is compact---our main case of interest---the 
canonical homomorphism from $G$ to  $G^{\mathbb C}$ is injective, 
and we can identify $G$ with $\overline G$.
For general $G$, a left translation equivariant immersion of
$\T \GG$ 
into $G^{\mathbb C}$, necessarily onto an open subset,
plainly induces a left translation invariant complex structure
on $\T \GG$, and 
when the immersion is also right translation equivariant,
the complex structure on $\T \GG$ is biinvariant.
Theorem \ref{fundth24} below says that
any left translation invariant complex structure on $\T \GG$
arises in this manner.

For the rest of the paper,
it will be convenient to trivialize the tangent bundle of $G$
and to play down the linear structure of the fibers.
When we view 
the Lie algebra
$\gg$ 
(of left-invariant vector fields on $G$)
merely as an affine manifold, we write it as
$\mathbb A_{\gg}$. Left translation yields the familiar 
$\GG$-equivariant diffeomorphism
\begin{equation}
\GG \times \mathbb A_{\gg} \longrightarrow 
\T \GG,
\label{leftt}
\end{equation}
the left $\GG$-action on the factor $\mathbb A_{\gg}$
being trivial,
and we will exclusively work with 
$\GG \times \mathbb A_{\gg}$ (rather than with $\T \GG$).
Accordingly we will say that a 
complex structure $J$ on
$\GG \times \mathbb A_{\gg}$ is {\em admissible\/}
when $J$ is left $G$-invariant and when
each left translation $\GG$-orbit is a totally real
submanifold; we will then
say that $J$ is an {\em admissible almost
complex structure\/} when $J$ is not required to
satisfy the integrability condition.
Notice that \eqref{leftt} is also $\GG$-equivariant
relative to right translation when the $\GG$-action
on $\GG \times \mathbb A_{\gg}$ from the right
is given by right translation in $\GG$ and the adjoint action on 
$\mathbb A_{\gg}$. Notice also that there is an obvious
bijective correspondence
between admissible (almost) complex structures on $\T \GG$ and
on $\GG \times \mathbb A_{\gg}$.

Let $\gamma\colon \mathbb A_{\gg}\to \GG^{\mathbb C}
$ 
be a smooth map
having the property that the
 composite
\begin{equation}
\begin{CD}
 \mathbb A_{\gg}
@>{\gamma}>>
 G^{\mathbb C}
@>{\pi}>>
\overline G\big\backslash  G^{\mathbb C}
\end{CD}
\label{compo}
\end{equation}
is a smooth map of maximal rank;
the domain and range of $\gamma$ being smooth manifolds of the same dimension,
$\gamma$ is necessarily an immersion and, furthermore, a submersion onto
an open subset of $\overline G\big\backslash  G^{\mathbb C}$.
Then
 the map
\begin{equation}
\Pi_{\gamma}\colon \GG \times \mathbb A_{\gg} \longrightarrow 
\GG^{\mathbb C} ,
\ (x,\qqq)\longmapsto x \,\gamma(\qqq),
\label{genpolar}
\end{equation}
is a left translation equivariant 
smooth immersion onto an open subset of $\GG^{\mathbb C}$.
Hence the complex structure of $\GG^{\mathbb C}$
induces an admissible complex structure $J_{\gamma}$ on 
$\GG \times\mathbb A_{\gg}$.
We will refer to \eqref{genpolar} as the {\em generalized polar map\/}
associated to $\gamma$.
Notice when $\gamma$ is equivariant with respect to the
adjoint action and $\GG$-conjugation in $\GG^{\mathbb C}$,
the complex structure $J_{\gamma}$ is
also right translation invariant 
and hence biinvariant.

\begin{thm}\label{fundth24}
Let $J$ 
be an admissible
complex structure on $\GG \times \mathbb A_{\gg}$.
There is a smooth map 
$\gamma_J\colon \mathbb A_{\gg}\to  \GG^{\mathbb C}$,
unique up to right multiplication by a constant member of $\GG^{\mathbb C}$,
such that the associated generalized polar map
{\rm \eqref{genpolar}}
is a left translation equivariant 
holomorphic immersion onto an open subset of 
$\GG^{\mathbb C}$, and the composite $\pi \circ \gamma \colon 
 \mathbb A_{\gg}\to\overline G\big\backslash  G^{\mathbb C}$
is necessarily an immersion.
In particular, when $\gamma_J$ is $\GG$-equivariant
relative to the adjoint action and $\GG$-conjugation in $\GG^{\mathbb C}$,
the map $\Pi_{\gamma_J}$ is also right translation equivariant,
and the complex structure $J$
is then right invariant as well and hence biinvariant.
\end{thm}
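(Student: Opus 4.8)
The plan is to reconstruct the map $\gamma_J$ directly from the complex structure $J$, using the fact that $J$ is determined by a connection $1$-form and a horizontal $1$-form as in Proposition 3.1 of \cite{MR1989647}. First I would fix the left translation $G$-orbit $G \times \{0\} \subseteq G \times \mathbb A_{\gg}$, which by admissibility is totally real; choosing a complement to its tangent space inside the (real) tangent space gives, after applying $J$, the data of an integrable distribution transverse to the orbit. The key observation is that the holomorphic structure on $G^{\mathbb C}$, restricted along $\overline G$, is itself of this shape: $\overline G$ is totally real in $G^{\mathbb C}$ and $G^{\mathbb C}$ is, near $\overline G$, modelled on $\overline G \times \mathfrak g$ via $(x,Y)\mapsto x\exp(\mathrm i Y)$. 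So I would aim to define $\gamma_J$ along $\{0\}$ as the inclusion $e \mapsto e$ and then extend it by integrating the complex structure: concretely, $\gamma_J(\qqq)$ should be the point of $G^{\mathbb C}$ reached by following the $J$-complexified directions out of $e$.

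The main work is to make "integrating $J$" precise and to check it produces a well-defined smooth map. Here I would use that $J$ on $G \times \mathbb A_{\gg}$ is left invariant, so it suffices to understand $J$ along the single orbit through $e$, i.e.\ along $\mathbb A_{\gg}$; left invariance then propagates the construction $G$-equivariantly, which is exactly what forces $\Pi_{\gamma_J}(x,\qqq) = x\,\gamma_J(\qqq)$ to be holomorphic once $\gamma_J$ is built to be holomorphic on the slice. More precisely, I would consider the map $\mathbb A_{\gg} \to G^{\mathbb C}$ that sends a path in $\mathbb A_{\gg}$ to the development of its $J$-image; the integrability of $J$ (the Nijenhuis conditions in Proposition 3.1 of \cite{MR1989647}) is what guarantees this development is path-independent, hence defines a genuine map $\gamma_J$, and the same integrability makes $\Pi_{\gamma_J}$ holomorphic rather than merely almost-holomorphic. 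That $\Pi_{\gamma_J}$ is an immersion onto an open set, and that $\pi\circ\gamma_J$ is an immersion, then follows from the dimension count already recorded after \eqref{compo}: $\Pi_{\gamma_J}$ is a local biholomorphism between equidimensional complex manifolds, so it is a local diffeomorphism, and the totally-real condition on the orbits translates into transversality of $\gamma_J(\mathbb A_{\gg})$ to the $\overline G$-cosets.

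For uniqueness up to right multiplication by a constant $c \in G^{\mathbb C}$: if $\gamma$ and $\gamma'$ both realize $J$ via their generalized polar maps, then $\Pi_{\gamma'}\circ\Pi_{\gamma}^{-1}$ is a left-translation-equivariant biholomorphism of an open set of $G^{\mathbb C}$ onto another, hence (by left equivariance, evaluating on the identity) is right multiplication by the fixed element $c = \gamma(0)^{-1}\gamma'(0)$; normalizing $\gamma_J(0)=e$ pins it down. Finally, for the biinvariance statement, suppose $\gamma_J$ is equivariant for the adjoint action on $\mathbb A_{\gg}$ and $G$-conjugation on $G^{\mathbb C}$. Then for $g \in G$ one computes
\[
\Pi_{\gamma_J}\bigl(g x g^{-1},\, \mathrm{Ad}_g\qqq\bigr)
= g x g^{-1}\,\gamma_J(\mathrm{Ad}_g\qqq)
= g x g^{-1}\, g\,\gamma_J(\qqq)\,g^{-1}
= g\,(x\,\gamma_J(\qqq))\,g^{-1},
\]
so $\Pi_{\gamma_J}$ intertwines the right $G$-action on $G\times\mathbb A_{\gg}$ (right translation together with the adjoint action, as noted after \eqref{leftt}) with $G$-conjugation on $G^{\mathbb C}$; since conjugation acts holomorphically on $G^{\mathbb C}$, the induced structure $J$ is right invariant as well, hence biinvariant. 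The hard part will be the path-independence in the construction of $\gamma_J$: this is precisely where the integrability of $J$ must be fed in, and where the explicit form of the connection and horizontal $1$-forms from \cite{MR1989647} enters.
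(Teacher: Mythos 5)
Your overall strategy coincides with the paper's: construct $\gamma_J$ by integration along the slice $\{e\}\times\mathbb A_{\gg}$, let left invariance propagate it, and obtain the immersion statements from the equidimensional count. But the step you defer twice, as ``the main work'' and again as ``the hard part'', is the entire content of the theorem, and the proposal never specifies what object is actually being integrated. The paper makes this precise in Lemma \ref{fundth11}: from the pair $(\theta,L)$ of Proposition \ref{fundth1} one extracts the $\ggg^{\mathbb C}$-valued $1$-form $\phi_J=\ppsi_\theta+i\llambda_L$ on $\mathbb A_{\gg}$, whose real part is the basic part of the connection form and whose imaginary part is the horizontal form $\llambda_L$, of maximal rank precisely because the orbits are totally real; the integrability of $J$ is then equivalent to the Maurer--Cartan-type condition \eqref{integrability} for $\phi_J$. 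Since $\mathbb A_{\gg}$ is simply connected, $\phi_J$ integrates to a smooth $\gamma_J\colon\mathbb A_{\gg}\to\GG^{\mathbb C}$ with $(d\gamma_J)\gamma_J^{-1}=\phi_J$, unique up to right multiplication by a constant element of $\GG^{\mathbb C}$; this single classical fact about primitives of a logarithmic derivative on a connected, simply connected domain delivers both the existence and the uniqueness clauses at once. Your ``development of the $J$-image of a path'' is exactly this integration, but without naming $\phi_J$ there is no $1$-form to develop along and no identity against which the Nijenhuis condition can be tested, so the construction is not yet specified.

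Two smaller points. First, your uniqueness argument via $\Pi_{\gamma'}\circ\Pi_{\gamma}^{-1}$ is shakier than the $1$-form argument: $\Pi_\gamma$ is only an immersion, so that composite need not be globally defined, and a left $\GG$-equivariant holomorphic map of an open subset of $\GG^{\mathbb C}$ is right multiplication by a constant only after one observes that $u\mapsto u^{-1}F(u)$ is holomorphic and constant on the maximal totally real left $\overline \GG$-cosets, hence locally constant; ``evaluating on the identity'' does not by itself suffice. Second, your verification of the final clause (equivariant $\gamma_J$ implies biinvariant $J$) is correct and matches the paper's intent; just note that, as Theorem \ref{fundth241} records, equivariance of $\gamma_J$ is sufficient but not necessary for biinvariance, the right notion being quasi equivariance, so your computation covers only the ``in particular'' sentence, which is indeed all that is asserted here.
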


Under the circumstances of
Theorem \ref{fundth24}, we will say that
$\gamma_J$ is an {\em admissible map inducing\/} $J$.
Notice we do not assert that 
a biinvariant admissible complex 
structure $J$
on $\GG \times \mathbb A_{\gg}$
is induced from a $\GG$-equivariant map 
$\gamma_J\colon \mathbb A_{\gg}\to  \GG^{\mathbb C}$.
 In the next subsection we shall explain
how a general biinvariant complex structure arises.

\subsection{Biinvariant complex structures}
\label{bcs}
The description of biinvariant complex structures on 
$\GG \times \mathbb A_{\gg}\cong \T \GG$
is more subtle. To prepare for it, 
let $G$ be a group, $H\subseteq G$ a subgroup,
and $B$ a simply connected (left) $H$-manifold.
The group $H$ acts on 
$\mathrm{Map}(B,G)$
by the association
\[
H \times \mathrm{Map}(B,G) \longrightarrow
\mathrm{Map}(B,G),\ (x,\gamma)\longmapsto {}^x\gamma,
\]
given by the explicit expression
\[
{}^x\gamma\colon B \longrightarrow G,\ 
{}^x\gamma(b) =x\gamma(x^{-1}b)x^{-1},\ x \in H,\ \gamma\colon B \to G.
\]
Thus a smooth map $\gamma\colon B \to G$ is
$H$-equivariant if and only if $\gamma$ is fixed under the
$H$-action on $\mathrm{Map}(B,G)$.
We will say that 
 a smooth map $\gamma\colon B \to G$ is {\em quasi\/} $H$-{\em equivariant\/}
when there is a smooth map $c\colon H \to G$ such that
\begin{equation}
 \gamma^{-1}(b)\,{}^x\gamma (b)=c(x),\ x \in H,\ b \in B.
\end{equation}
Thus  a smooth map $\gamma\colon B \to G$ is $H$-equivariant
if and only if it is quasi $H$-equivariant relative to the 
constant smooth map $c\colon H \to G$ where $c(x)=e$ as $x$ ranges over $H$.
Accordingly, we will say that a smooth left equivariant map
$\Pi\colon \GG \times \mathbb A_{\gg} \to \GG^{\mathbb C}$ 
is {\em quasi right\/}
$\GG$-equivariant when
there is a smooth map 
$c\colon \GG \to \GG^{\mathbb C}$ such that
\begin{equation}
\Pi((x,Y)z)
=\Pi(x,Y)c(z) z,\ x \in \GG,\ Y \in \mathbb A_{\gg},\ z \in \GG.
\end{equation}
In particular, a smooth left equivariant map
$\Pi\colon \GG \times \mathbb A_{\gg} \to \GG^{\mathbb C}$ is 
biinvariant if and only if it is quasi right
$\GG$-equivariant 
 relative to the 
constant smooth map $c\colon \GG \to \GG^{\mathbb C}$ where $c(x)=e$ as 
$x$ ranges over $\GG$.

\begin{thm}\label{fundth241}
Under the circumstances of Theorem {\rm \ref{fundth24}},
the left invariant complex structure $J$ on $\GG \times \mathbb A_{\gg}$
is biinvariant if and only if the smooth map
$\gamma_J\colon \mathbb A_{\gg} \to \GG^{\mathbb C}$
is, furthermore, quasi $\GG$-equivariant.
\end{thm}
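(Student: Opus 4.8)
The plan is to trace through how the map $\gamma_J$ of Theorem~\ref{fundth24} interacts with the right $\GG$-action on $\GG\times\mathbb A_{\gg}$, using the fact that this right action (right translation in the first factor together with the adjoint action on $\mathbb A_{\gg}$) is the one making \eqref{leftt} right equivariant and hence is built into the notion of admissibility and biinvariance. First I would recall from Theorem~\ref{fundth24} that $\gamma_J$ is characterised, up to right multiplication by a constant element of $\GG^{\mathbb C}$, by the property that $\Pi_{\gamma_J}\colon(x,\qqq)\mapsto x\,\gamma_J(\qqq)$ is a left equivariant holomorphic immersion onto an open subset of $\GG^{\mathbb C}$; the complex structure $J$ is precisely the one pulled back along $\Pi_{\gamma_J}$.

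Next I would observe that $J$ is right invariant if and only if, for each $z\in\GG$, the right translation $R_z\colon(x,\qqq)\mapsto(xz,\mathrm{Ad}_{z^{-1}}\qqq)$ is holomorphic for $J$. Since $J$ is the pullback of the complex structure of $\GG^{\mathbb C}$ along the immersion $\Pi_{\gamma_J}$, and since the complex structure of $\GG^{\mathbb C}$ is biinvariant, $R_z$ is $J$-holomorphic if and only if the composite $\Pi_{\gamma_J}\circ R_z$ agrees, locally, with a holomorphic self-map of (an open subset of) $\GG^{\mathbb C}$; the natural candidate is left translation in $\GG^{\mathbb C}$ by $\Pi_{\gamma_J}(x,\qqq)^{-1}$ followed by something depending only on $z$. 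Concretely, one computes
\[
\Pi_{\gamma_J}(R_z(x,\qqq))
= xz\,\gamma_J(\mathrm{Ad}_{z^{-1}}\qqq)
= x\,\gamma_J(\qqq)\cdot\bigl(\gamma_J(\qqq)^{-1}z\,\gamma_J(\mathrm{Ad}_{z^{-1}}\qqq)\bigr).
\]
Thus biinvariance of $J$ is equivalent to the requirement that the element
\[
\delta(x,\qqq,z)=\gamma_J(\qqq)^{-1}\,z\,\gamma_J(\mathrm{Ad}_{z^{-1}}\qqq)\in\GG^{\mathbb C}
\]
depend holomorphically on $(x,\qqq)$ through $\Pi_{\gamma_J}(x,\qqq)$; but $\delta$ does not depend on $x$ at all, and the restriction of $\Pi_{\gamma_J}$ to a fibre $\{x\}\times\mathbb A_{\gg}$ is the totally real slice (this is exactly admissibility), so a map factoring holomorphically through $\Pi_{\gamma_J}$ and constant along each such totally real slice of full real dimension must be locally constant in $\qqq$. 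Since $\mathbb A_{\gg}$ is connected, $\delta$ is independent of $\qqq$: there is a smooth map $c\colon\GG\to\GG^{\mathbb C}$ with $\gamma_J(\qqq)^{-1}\,z\,\gamma_J(\mathrm{Ad}_{z^{-1}}\qqq)=c(z)z$, i.e. $\gamma_J^{-1}(\qqq)\,{}^{z}\gamma_J(\qqq)=c(z)$ in the notation of Subsection~\ref{bcs} (with $H=\GG$, $B=\mathbb A_{\gg}$), which is precisely the statement that $\gamma_J$ is quasi $\GG$-equivariant. Conversely, if $\gamma_J$ is quasi $\GG$-equivariant with associated $c$, the displayed identity shows $\Pi_{\gamma_J}\circ R_z = (\text{right translation by }c(z)z)\circ\Pi_{\gamma_J}$, a biholomorphism of $\GG^{\mathbb C}$, so $R_z$ is $J$-holomorphic for every $z$ and $J$ is biinvariant.

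The step I expect to be the main obstacle is the rigidity argument that $\delta$ is independent of $\qqq$: one must argue carefully that a smooth $\GG^{\mathbb C}$-valued function which is holomorphic when expressed through the immersion $\Pi_{\gamma_J}$, yet constant along the totally real fibre directions, is forced to be constant in $\qqq$. The clean way is to work infinitesimally — differentiate the relation $\Pi_{\gamma_J}\circ R_z = L\circ\Pi_{\gamma_J}$ (for the to-be-determined left translation $L$ in $\GG^{\mathbb C}$) in the $\mathbb A_{\gg}$-directions, use that $d\Pi_{\gamma_J}$ carries the real tangent space of the totally real fibre isomorphically onto a totally real subspace whose complexification is all of $\T\GG^{\mathbb C}$, and conclude by the Cauchy–Riemann equations that the $\qqq$-derivative of $\delta$ vanishes. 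One should also double-check the well-definedness point: the ambiguity of $\gamma_J$ up to a constant right factor $g_0\in\GG^{\mathbb C}$ changes $c$ by $z\mapsto g_0^{-1}c(z)\,\mathrm{(conjugate)}$ but does not affect the quasi-equivariance property itself, so the criterion is intrinsic to $J$.
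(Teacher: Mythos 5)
Your argument is correct in substance but takes a genuinely different route from the paper. The paper works infinitesimally: right-invariance of $J$ forces the $\ggg^{\mathbb C}$-valued $1$-form $\phi_J=(d\gamma_J)\gamma_J^{-1}$ to be equivariant for the action of Subsection~\ref{bcs}, and a preparatory lemma (proved by uniqueness of integration on the simply connected $\mathbb A_{\gg}$: $\gamma$ and ${}^x\gamma$ have the same logarithmic derivative, hence differ by a right constant) identifies equivariance of $(d\gamma)\gamma^{-1}$ with quasi-equivariance of $\gamma$; Corollary~\ref{cor102} then gives the conclusion. You instead argue globally with $\delta(\qqq,z)=\gamma_J(\qqq)^{-1}z\,\gamma_J(\mathrm{Ad}_{z^{-1}}\qqq)$, using holomorphy and total reality to force $\delta$ to be constant in $\qqq$; your converse direction and your remark that the criterion is insensitive to the constant right ambiguity in $\gamma_J$ are both correct and match the paper's concluding computation. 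One correction to your rigidity step, though: admissibility makes the left-translation \emph{orbits} $G\times\{\qqq\}$ totally real, not the slices $\{x\}\times\mathbb A_{\gg}$ as you assert, and as literally written your sentence assumes constancy of $\delta$ along $\{x\}\times\mathbb A_{\gg}$, which is exactly what is to be proved. The argument you want is: $\delta$ is independent of $x$, so $d\delta$ vanishes on $\T\mathcal F_G$, the tangent bundle of the orbits; $\delta$ is holomorphic (being built from the holomorphic $\Pi_{\gamma_J}$, the $J$-holomorphic $R_z$, and the holomorphic group operations of $\GG^{\mathbb C}$), so $d\delta$ also vanishes on $J(\T\mathcal F_G)$; and $\T\mathcal F_G\oplus J(\T\mathcal F_G)=\T(\GG\times\mathbb A_{\gg})$ by admissibility, whence $d\delta=0$ and $\delta$ is constant on the connected space $\GG\times\mathbb A_{\gg}$. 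The Cauchy--Riemann sketch in your final paragraph is the right mechanism once the roles of the two foliations are interchanged.
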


\subsection{The standard structure}
\label{ordinarypo}
An example of a generalized polar map 
is the ordinary polar map.
With the notation
\begin{equation}
\gamma_{\mathrm{st}}\colon \mathbb A_{\gg}\longrightarrow \GG^{\mathbb C},
\quad
\gamma_{\mathrm{st}}(\qqq)=\mathrm{exp}(i\qqq),\  \qqq \in \mathbb A_{\gg},
\end{equation}
the ordinary polar map takes the form
\begin{equation}
\Pi=\Pi_{\mathrm{st}}\colon 
\GG \times \mathbb A_{\gg} \longrightarrow 
\GG^{\mathbb C} ,
\ (x,\qqq)\longmapsto x\gamma_{\mathrm{st}}(\qqq),
\ x\in G,\  \qqq \in \mathbb A_{\gg}.
\label{polardec}
\end{equation}
For $G$ compact, this map is a diffeomorphism
and thus plainly induces an admissible complex structure on 
$\GG \times \mathbb A_{\gg}$; we  refer to this structure as the
{\em standard complex structure\/} on
$\GG \times \mathbb A_{\gg}$ and denote it by $J_{\mathrm{st}}$.

For general $\GG$, in view of the classical expression for the derivative of
the exponential mapping,
cf. e.~g. \cite {MR754767} (II.1.7),
in terms of the 
$\gg^{\mathbb C}$-valued 
1-form
\[
\phi_{\mathrm{st}}= 
(d\gamma_{\mathrm{st}})\gamma_{\mathrm{st}}^{-1}
\colon \T \mathbb A_{\gg} \to \gg \oplus i \gg,
\]
at $\qqq \in \mathbb A_{\gg}$,
the derivative
\begin{equation*}
(d\phi_{\mathrm{st}})_{\qqq}
\colon
\T_{\qqq} \mathbb A_{\gg}\cong\mathfrak g
\to
\mathrm T_{\mathrm{exp}(i\qqq)}\GG^{\mathbb C}
\to
\T_e\GG^{\mathbb C} = \gg \oplus i \gg
\end{equation*}
is given by the association
\begin{equation}
V \longmapsto \frac{\mathrm{cos}(\mathrm{ad}(\qqq))-\mathrm{Id}}
{\mathrm{ad}(\qqq)} (V)+ i\frac{\mathrm{sin}(\mathrm{ad}(\qqq))}
{\mathrm{ad}(\qqq)} (V),\ V \in \gg.
\label{associ1}
\end{equation}

When $G$ is not compact, in general, the canonical map
$G\to G^{\mathbb C}$ is not injective, and the projection
$G \to \overline G$ to the image  $\overline G$
of $G$ in $G^{\mathbb C}$
is a covering projection.
Moreover, even when 
the complexification map $G \to G^{\mathbb C}$
is injective (so that 
$G \to \overline G$ identifies the two groups),
the topology of
$\overline G \big\backslash G^{\mathbb C}$ is in general non-trivial.
This happens, for example, when $G=\mathrm{SL}(2,\mathbb R)$.
Furthermore, in general, the ordinary polar map 
$\Pi_{\mathrm{st}}$ is not even 
a local diffeomorphism:
Indeed, in view of \eqref{associ1}, 
at $\qqq \in \mathbb A_{\gg}$,
the derivative of
\eqref{compo}
is given by the association
\begin{equation}
\T \mathbb A_{\gg}\cong \gg \longrightarrow \gg,
\ 
V \longmapsto\frac{\mathrm{sin}(\mathrm{ad}(\qqq))}
{\mathrm{ad}(\qqq)} (V),\ V \in \gg.
\end{equation}
Hence \eqref{compo}
is a local diffeomorphism 
at $\qqq \in \mathbb A_{\gg}$
if and only if
the linear endomorphism
$\frac{\mathrm{sin}(\mathrm{ad}(\qqq))}
{\mathrm{ad}(\qqq)}$
of $\gg$ is invertible.
This explains why, for non-compact semisimple $\GG$,
the adapted complex structure
is not defined on all of $\T \GG$;
see e.~g. \cite{MR2127990} for details.

\subsection{The infinitesimal version}

We view the left invariant Maurer-Cartan form 
 of $G$ as a
trivializable principal left $G$-bundle 
$\omega_G\colon \T \GG \to \mathbb A_{\mathfrak g}$.
Via the trivialization \eqref{leftt},
this bundle comes down to the trivial principal
left $\GG$-bundle
$\mathrm{pr_{\mathbb A_{\gg}}}\colon \GG \times\mathbb A_{\mathfrak g} \to \mathbb A_{\mathfrak g}$.
For better readability, in the present subsection,
we will write the total space 
$\GG \times\mathbb A_{\mathfrak g}$ as 
$\PGG$.
We denote 
the resulting foliation of $\PGG$ given by the fibers of $\omega_G$
by $\mathcal F_G$, and we write the tangent bundle of  $\mathcal F_G$ as
$\tau_{\mathcal F_G}\colon \T \mathcal F_G \to \PGG$. The vector bundle
$\tau_{\mathcal F_G}$ is the vertical subbundle of the tangent bundle
$\tau_{\PGG}$ of $\PGG$ with respect to principal left $G$-structure of $\PGG$.

The fundamental vector field map
$
\overline {\mathfrak g}\times \PGG \longrightarrow  \T \PGG
$
identifies the trivial vector bundle
${\mathrm{pr}_{\PGG}\colon\ggg\times \PGG
\to \PGG}
$ 
on $\PGG$
with the vertical subbundle ${\tau_{\mathcal F_G}\colon \T \mathcal F_G \to  \PGG}$ 
of  $\tau_{ \PGG}\colon \T  \PGG \to  \PGG$.
Likewise, the obvious map
is a diffeomorphism 
\[
\GG \times \T \mathbb A_{\gg} \to \PGG \times _{\mathbb A_{\gg}}  \T \mathbb A_{\gg}. 
\]
Consequently
the fundamental exact vector bundle sequence  
associated to the principal left $G$-bundle 
$\mathrm{pr_{\mathbb A_{\gg}}}\colon \PGG \to \mathbb A_{\gg}$
takes the form
\begin{equation}
\begin{CD}
0
@>>>
\overline {\mathfrak g}\times  \PGG
@>>>
\T \PGG
@>>>
\GG \times \T \mathbb A_{\gg}
@>>>
0.
\end{CD}
\label{funda22}
\end{equation}
The projection 
$\mathrm{pr_{\mathbb A_{\gg}}}$
induces an isomorphism
\begin{equation}
\mathcal A(\mathbb A_{\gg},\ggg) \longrightarrow 
\mathcal A_{\mathrm{basic}}(\PGG,\ggg)
=\mathcal A_{\mathrm{basic}}(\GG \times \mathbb A_{\gg},\ggg)
\label{immediate}
\end{equation}
of graded vector spaces onto the graded vector space of basic forms.
We will say that 
a basic 
$\ggg$-valued
1-form 
on $ \PGG=\GG \times \mathbb A_{\gg}$ is {\em regular\/}
when the associated 
$\ggg$-valued
1-form on  $\mathbb A_{\gg}$ has maximal rank,
that is, is an isomorphism 
${\T _\qqq \mathbb A_{\gg} \to \ggg}$
of real vector spaces for every point $\qqq$ of $\mathbb A_{\gg}$.
The following observation is essentially Proposition 3.1 in 
\cite {MR1989647}, and we refer to that paper for a proof.
We reproduce the wording here to introduce notation.

\begin{prop} \label{fundth1}
{\rm (i)}
Let $J$ be an admissible 
almost complex structure 
on $ \PGG=\GG \times \mathbb A_{\gg}$ 
and let
$
\tau_{ \PGG}=\tau_{\mathcal F_G} 
\oplus J(\tau_{\mathcal F_G})
$
be the associated  Whitney sum decomposition
of the tangent bundle ${\tau_{ \PGG}\colon \T \PGG \to \PGG }$
of $ \PGG$ as a sum 
of the vertical subbundle $\tau_{\mathcal F_G}$ and
$J(\tau_{\mathcal F_G})\colon J(\T \mathcal F_G)\to  \PGG $.
Then the
projection
\[
\T  \PGG= \T \mathcal F_G \oplus J(\T \mathcal F_G) \longrightarrow
 \T \mathcal F_G,
\]
combined with the projection
$
\T \mathcal F_G \cong\ggg \times  \PGG \longrightarrow \ggg,
$
yields a principal left $G$-connection form 
$\theta_J\colon \T  \PGG \to\ggg$
for $\mathrm{pr_{\mathbb A_{\gg}}}$,
and the $\ggg$-valued $1$-form
\begin{equation}
L_J= -\theta_J \circ J\colon \T  \PGG \to\ggg
\label{L}
\end{equation}
is basic and regular.

\noindent
{\rm (ii)}
Conversely, a left $G$-connection form 
$\theta\colon \T  \PGG \to \ggg$ for 
$\mathrm{pr_{\mathbb A_{\gg}}}$ and a basic regular
$\ggg$-valued
$1$-form
$
L\colon \T  \PGG \to\ggg
$
determine a unique admissible
almost complex structure $J$ on $ \PGG=\GG \times \mathbb A_{\gg}$
such that $\theta=\theta_J$ and $L=L_J$.

\noindent
{\rm (iii)}
Under the circumstances of 
$(i)$ or $(ii)$ above,
the almost complex structure $J$ is integrable if and only if
\begin{align}
F_{\theta_J}&=L_{J} \wedge^{-} L_{J}
\,(=\tfrac 12[L_{J}, L_{J}]^{-})
\in 
\mathcal A^2(\T  \PGG,\ggg),
\label{integra1}
\\
d^{\theta_J}L_J&=0 .
\label{integra2}
\end{align}
 
\qed \end{prop}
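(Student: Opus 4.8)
The plan is to handle the three parts in turn, working throughout with $\PGG=\GG\times\mathbb A_{\gg}$ and the trivialization \eqref{leftt}, under which the left translation $\GG$-orbits are exactly the leaves $\GG\times\{\qqq\}$ of $\mathcal F_G$, that is, the fibers of $\mathrm{pr}_{\mathbb A_{\gg}}$, hence the fibers of $\tau_{\mathcal F_G}$. Such a leaf has real dimension $\dim\GG=\tfrac12\dim\PGG$, so $J$ being admissible (every orbit totally real) is equivalent to the Whitney sum decomposition $\tau_{\PGG}=\tau_{\mathcal F_G}\oplus J(\tau_{\mathcal F_G})$; conversely, any complex structure for which $J(\tau_{\mathcal F_G})$ is transverse to $\tau_{\mathcal F_G}$ makes the orbits totally real. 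With this reformulation in hand, (i) and (ii) become fiberwise linear algebra together with equivariance and smoothness checks.

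\emph{Parts (i) and (ii).} For (i), $\theta_J$ is defined exactly as in the statement. On $\tau_{\mathcal F_G}$ the projection is the identity, so there $\theta_J$ is the canonical trivialization $\T\mathcal F_G\cong\ggg\times\PGG\to\ggg$, i.e.\ the required extension of the Maurer--Cartan form, and $\theta_J(xY)=\mathrm{Ad}_x\theta_J(Y)$ follows from left invariance of $J$ (which makes $J(\tau_{\mathcal F_G})$, hence the projection, equivariant) together with equivariance of the fundamental vector field map; thus $\theta_J$ is a principal left $\GG$-connection form. The $1$-form $L_J=-\theta_J\circ J$ kills $\tau_{\mathcal F_G}$, because $J$ sends it into $J(\tau_{\mathcal F_G})=\ker\theta_J$, and is $\GG$-invariant, so it is basic; it is regular because $J$ restricts to an isomorphism $J(\T_p\mathcal F_G)\to\T_p\mathcal F_G$ (as $J^2=-\mathrm{Id}$) and $\theta_J$ to an isomorphism $\T_p\mathcal F_G\to\ggg$, so $L_J$ maps the horizontal space isomorphically onto $\ggg$, which is the maximal rank condition via \eqref{immediate}. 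For (ii), put $H=\ker\theta$; regularity of $L$ gives isomorphisms $L|_{H_p}\colon H_p\to\ggg$ and $\theta|_{\T_p\mathcal F_G}\colon\T_p\mathcal F_G\to\ggg$, hence a unique bundle isomorphism $\phi\colon\tau_{\mathcal F_G}\to H$ with $L\circ\phi=\theta$ on $\tau_{\mathcal F_G}$; set $J|_{\tau_{\mathcal F_G}}=\phi$ and $J|_H=-\phi^{-1}$. Then $J^2=-\mathrm{Id}$, $J$ is smooth, left invariant and admissible (since $\T\mathcal F_G\cap H=0$), and retracing (i) gives $\theta_J=\theta$ and $L_J=-\theta_J\circ J=L$; the same bookkeeping shows that $(\theta_{J'},L_{J'})$ determines $J'$, which is the asserted uniqueness.

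\emph{Part (iii).} Here I would invoke the Newlander--Nirenberg theorem: $J$ is integrable iff the eigenbundle $\T^{0,1}\subseteq\T\PGG\otimes\mathbb C$ of $J$ for the eigenvalue $-\i$ is closed under Lie bracket. Since the fundamental vertical fields $\widetilde a$ ($a\in\ggg$) span $\tau_{\mathcal F_G}$, the fields $Z_a=\widetilde a+\i h_a$ with $h_a=J\widetilde a\in H$ form a global frame of $\T^{0,1}$, so integrability is equivalent to $[Z_a,Z_b]\in\T^{0,1}$ for all $a,b\in\ggg$. Expanding $[Z_a,Z_b]$ and separating real from imaginary and vertical from horizontal, the inputs are: $[\widetilde a,\widetilde b]=\widetilde{[a,b]^{-}}$ (the fundamental fields realize the bracket of $\ggg$); $[\widetilde a,h_b]=J[\widetilde a,\widetilde b]=h_{[a,b]^{-}}$, since $J$ is invariant under the flow of $\widetilde a$; the identities $\theta(h_a)=0$ and $L(h_a)=\theta(\widetilde a)=a$ coming from $L=-\theta_J\circ J$ and the fact that $L(h_\bullet)$ is a constant $\ggg$-valued function; and Cartan's formula for $\theta$ and for $L$ combined with the structure equation $F_\theta=d\theta+\tfrac12[\theta,\theta]^{-}$ and $d^{\theta}L=dL+[\theta,L]^{-}$. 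One finds that membership of $[Z_a,Z_b]$ in $\T^{0,1}$ collapses to the single relation $[h_a,h_b]=-\widetilde{[a,b]^{-}}$; its vertical ($\theta$-)part reads $F_\theta(h_a,h_b)=[a,b]^{-}=(L\wedge^{-}L)(h_a,h_b)$, which, as the $h_a$ span the horizontal bundle and both sides are basic $2$-forms, is \eqref{integra1}; its horizontal ($L$-)part reads $(d^{\theta}L)(h_a,h_b)=0$, which, $d^{\theta}L$ being basic, is \eqref{integra2}; conversely \eqref{integra1}--\eqref{integra2} make $\T^{0,1}$ involutive.

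The main obstacle is the sign bookkeeping in (iii): one must fix consistent conventions for left- versus right-invariant fields, for the opposite bracket $[\,\cdot\,,\,\cdot\,]^{-}$, for the factors $\tfrac12$ in the curvature and in $\wedge^{-}$, and for the sign in the Newlander--Nirenberg characterization, and then split the single obstruction $[h_a,h_b]+\widetilde{[a,b]^{-}}$ correctly into its two tensorial components \eqref{integra1} and \eqref{integra2}. One should also check that the curvature and covariant-derivative formalism \eqref{rcurva}--\eqref{cova}, set up in Section~\ref{gaugeright} for a right $\GG$-module, applies verbatim with $V=\ggg$ and the action underlying \eqref{rpair}. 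Everything else is routine.
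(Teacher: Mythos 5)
The paper gives no proof of this proposition at all: it states that the result is essentially Proposition 3.1 of \cite{MR1989647} and refers the reader there, so there is no in-paper argument to compare yours against; your write-up is, as far as I can check, correct and self-contained. In (i)--(ii), the identification of admissibility with the splitting $\T\PGG=\T\mathcal F_G\oplus J(\T\mathcal F_G)$ via the dimension count $\dim(\GG\times\{\qqq\})=\tfrac12\dim\PGG$ is exactly the right starting point, and your convention $J|_{\T\mathcal F_G}=\phi$, $J|_{H}=-\phi^{-1}$ with $L\circ\phi=\theta$ on the vertical bundle is consistent with the paper's explicit formula \eqref{explicit2}, so the recovered pair is indeed $(\theta_J,L_J)$ with the signs of \eqref{L}. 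In (iii), reducing involutivity of $\T^{0,1}$ on the frame $Z_a=\tilde a+\i h_a$ to the single relation $[h_a,h_b]=-\widetilde{[a,b]^-}$ is clean: the mixed brackets are absorbed by $\mathcal L_{\tilde a}J=0$, and I confirm the split of the remaining obstruction, namely $F_{\theta}(h_a,h_b)=-\theta([h_a,h_b])$ versus $(L\wedge^-L)(h_a,h_b)=[L(h_a),L(h_b)]^-=[a,b]^-$ for the vertical component, and $d^{\theta}L(h_a,h_b)=-L([h_a,h_b])$ (using that $L(h_a)=a$ is constant and $\theta(h_a)=0$) for the horizontal one; since all the $2$-forms involved are basic, checking them on horizontal pairs suffices, so \eqref{integra1} and \eqref{integra2} together are equivalent to the relation and hence, by Newlander--Nirenberg, to integrability. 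This is in substance the Nijenhuis-tensor computation of \cite{MR1989647} repackaged; what your organization buys is that the two integrability conditions appear as the two tensorial components of a single bracket obstruction, which makes both implications transparent. The only points I would insist you write out are the imaginary part of $[Z_a,Z_b]$ (it is $2h_{[a,b]^-}$, and one must check that subtracting $2Z_{[a,b]^-}$ leaves a \emph{real} field, which can lie in $\T^{0,1}$ only if it vanishes) and the normalization $L\wedge^-L=\tfrac12[L,L]^-$, both of which you flag as sign bookkeeping and both of which do come out right.
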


\subsection{Interpretation in terms of the complexified Lie algebra}
\label{interp1}
The bundle $\mathrm{pr}_{\mathbb A_{\gg}}\colon \GG \times \mathbb A_{\gg} \to  \mathbb A_{\gg}$ being trivial,
the associated (trivial) principal left $G$-connection form
$\theta_{\GG}\colon \T (\GG \times \mathbb A_{\gg}) \longrightarrow \ggg$
is  the obvious extension
of the right invariant Maurer-Cartan form 
$\overline \omega_{\GG}\colon  \T \GG \to \ggg$
of $\GG$ to a principal left $G$-connection form
on $ \GG \times \mathbb A_{{\gg}}  $.

Let $J$ be an admissible 
almost complex structure on $\GG \times \mathbb A_{\gg}$.
Let $(\theta,L)$ be the pair of $\ggg$-valued 1-forms
on $\GG \times \mathbb A_{\gg}$ associated to $J$ by the construction in Proposition \ref{fundth1}.
Let
$\ppsi_{\theta}\colon \T \mathbb A_{\gg} \to \ggg$
denote the $\ggg$-valued 1-form
on $\mathbb A_{\gg}$
whose extension 
$\ppsi^G_{\theta}\colon G\times \T \mathbb A_{\gg} \to \ggg$
to $G\times  \T \mathbb A_{\gg}$
yields, via the projection from
$\T (\GG\times \mathbb A_{\gg})$
to
$\GG\times  \T \mathbb A_{\gg}$, a uniquely determined basic form
$\ppsi^G_{\theta}$ on $\GG\times \mathbb A_{\gg}$
so that
\begin{equation}
\theta=\theta_{\GG} + \ppsi^G_{\theta}.
\label{psitheta}
\end{equation}
Likewise
let
$\llambda_L\colon  \T \mathbb A_{\gg} \to \ggg$
be the  
$\ggg$-valued 1-form on $\mathbb A_{\gg}$ 
of maximal rank
associated to the basic 1-form $L$ through the identification \eqref{immediate}.

\begin{lem} \label{fundth11} {\rm (i)}
Let $J$ be an admissible almost complex structure 
on $\GG \times \mathbb A_{\gg}$,
let $(\theta,L)$ be the pair of $\ggg$-valued $1$-forms
on $\GG \times \mathbb A_{\gg}$ associated to $J$ by the construction 
in Proposition {\rm \ref{fundth1}}, and let
$(\ppsi_{\theta},\llambda_L)$ be the associated 
pair of $1$-forms  on $\mathbb A_{\gg}$.
The $\ggg^{\mathbb C}$-valued $1$-form 
\begin{equation}
\phi_{J}=\ppsi_{\theta} + i \llambda_L\colon  \T \mathbb A_{\gg} \longrightarrow 
\ggg^{\mathbb C} = \ggg\oplus i \ggg
\label{thetaL}
\end{equation}
on $\mathbb A_{\gg}$
has the property that its imaginary part 
$\T \mathbb A_{\gg} \to \ggg
$
(component of $\phi_{J}$ into $i\,\ggg $)
has maximal rank.

\noindent
{\rm (ii)}
Every  
$\ggg^{\mathbb C}$-valued $1$-form 
on $\mathbb A_{\gg}$
whose imaginary part has maximal rank
arises in this manner
from an almost complex structure $J$ 
on $ \GG \times \mathbb A_{\gg}$ of the kind spelled out in 
{\rm (i)}.

\noindent
{\rm (iii)}
Under the circumstances of {\rm (i)} or {\rm (ii)},
the  almost complex structure $J$ on $ \GG \times \mathbb A_{\gg}$ is 
integrable if and only
if $\phi_{J}$ satisfies the integrability condition
\begin{equation}
d\phi_{J} + \phi_{J}\wedge^{-}\phi_{J}
\in \mathcal A^2(\mathbb A_{\mathfrak g}, \ggg).
\label{integrability}
\end{equation} 
\end{lem}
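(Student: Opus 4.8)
The plan is to deduce all three assertions from Proposition~\ref{fundth1} by transporting everything between basic $\ggg$-valued forms on $\PGG=\GG\times\mathbb{A}_{\gg}$ and $\ggg$-valued forms on $\mathbb{A}_{\gg}$ via the isomorphism \eqref{immediate}. Two features of that isomorphism will be used throughout: it is compatible with $\wedge^{-}$ and with the pairing $[\,\cdot\,,\,\cdot\,]^{-}$, and, restricted to basic forms, it carries the covariant derivative $d^{\theta_{\GG}}$ with respect to the reference connection $\theta_{\GG}$ of \eqref{psitheta} into the ordinary de Rham operator $d$ on $\mathbb{A}_{\gg}$; here $\theta_{\GG}$ is flat, $F_{\theta_{\GG}}=0$, since it is the extension to $\PGG$ of a form satisfying the structure equation \eqref{MCr2}. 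Granting this, parts (i) and (ii) are quick and the substance lies in part (iii).

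For (i), Proposition~\ref{fundth1}(i) says the $1$-form $L_J$ attached to $J$ is basic and regular, so the corresponding $1$-form $\llambda_L$ on $\mathbb{A}_{\gg}$ has maximal rank; but by the definition \eqref{thetaL} of $\phi_J$, this $\llambda_L$ is precisely its imaginary part. For (ii), given $\phi=\ppsi+i\llambda$ on $\mathbb{A}_{\gg}$ with $\llambda$ of maximal rank, I would put $\theta:=\theta_{\GG}+\ppsi^G$, where $\ppsi^G$ is the basic form corresponding to $\ppsi$, and let $L$ be the basic regular form corresponding to $\llambda$. Since $\ppsi^G$, being basic, is horizontal and $\GG$-equivariant, $\theta$ still restricts to $\overline\omega_{\GG}$ on the vertical bundle and is $\GG$-equivariant, hence is a connection form for $\mathrm{pr}_{\mathbb{A}_{\gg}}$. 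Proposition~\ref{fundth1}(ii) then produces a unique admissible almost complex structure $J$ with $\theta_J=\theta$ and $L_J=L$, and by construction $\ppsi_{\theta_J}=\ppsi$ and $\llambda_{L_J}=\llambda$, so $\phi_J=\phi$.

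For (iii) I would first rewrite the two integrability conditions of Proposition~\ref{fundth1}(iii) through \eqref{immediate}. Writing $\theta_J=\theta_{\GG}+\ppsi^G_{\theta}$ and using $F_{\theta_{\GG}}=0$ gives $F_{\theta_J}=d^{\theta_{\GG}}\ppsi^G_{\theta}+\ppsi^G_{\theta}\wedge^{-}\ppsi^G_{\theta}$, which under \eqref{immediate} corresponds to $d\ppsi_{\theta}+\ppsi_{\theta}\wedge^{-}\ppsi_{\theta}$ on $\mathbb{A}_{\gg}$; likewise $L_J\wedge^{-}L_J$ corresponds to $\llambda_L\wedge^{-}\llambda_L$, and, since $d^{\theta_J}=d^{\theta_{\GG}}+[\ppsi^G_{\theta},\,\cdot\,]^{-}$, the form $d^{\theta_J}L_J$ corresponds to $d\llambda_L+[\ppsi_{\theta},\llambda_L]^{-}$. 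Next, expanding $d\phi_J+\phi_J\wedge^{-}\phi_J$ with $\phi_J=\ppsi_{\theta}+i\llambda_L$ and using that the bracket of $\ggg^{\mathbb{C}}$ is the $\mathbb{C}$-bilinear extension of $[\,\cdot\,,\,\cdot\,]^{-}$, one obtains
\[
d\phi_J+\phi_J\wedge^{-}\phi_J
=\bigl(d\ppsi_{\theta}+\ppsi_{\theta}\wedge^{-}\ppsi_{\theta}-\llambda_L\wedge^{-}\llambda_L\bigr)
+i\bigl(d\llambda_L+[\ppsi_{\theta},\llambda_L]^{-}\bigr).
\]
Comparing the two computations, the real part of the right-hand side is the form corresponding to $F_{\theta_J}-L_J\wedge^{-}L_J$ and the imaginary part is the one corresponding to $d^{\theta_J}L_J$. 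Since \eqref{immediate} is an isomorphism, conditions \eqref{integra1} and \eqref{integra2} hold simultaneously if and only if both parts vanish, which is the integrability condition \eqref{integrability}; by Proposition~\ref{fundth1}(iii) this is exactly integrability of $J$.

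The step I expect to be the main obstacle is the identification of $F_{\theta_J}$ and $d^{\theta_J}L_J$ in the first half of the previous paragraph: one must keep in mind that \eqref{immediate} is not the naive pull-back but carries an $\mathrm{Ad}$-twist, verify that $d^{\theta_{\GG}}$ really does go over to the de Rham $d$ on $\mathbb{A}_{\gg}$ under it, and handle the $[\,\cdot\,,\,\cdot\,]^{-}$-versus-$[\,\cdot\,,\,\cdot\,]$ sign conventions in the curvature and structure equations carefully enough that the real and imaginary parts emerge with precisely the signs needed to match \eqref{integra1} and \eqref{integra2}.
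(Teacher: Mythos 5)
Your argument is correct and is, in substance, the proof the paper intends: the paper's own proof of this lemma consists of the single assertion that $(\theta,L)$ satisfies \eqref{integra1}--\eqref{integra2} if and only if $\phi_J$ satisfies \eqref{integrability}, and your computation---transporting $F_{\theta_J}$, $L_J\wedge^{-}L_J$ and $d^{\theta_J}L_J$ through \eqref{immediate} and matching them against the real and imaginary parts of $d\phi_J+\phi_J\wedge^{-}\phi_J$---is exactly the verification that assertion calls for; parts (i) and (ii) are handled the same way in both.

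One caveat, which your own computation exposes: you prove that \eqref{integra1} and \eqref{integra2} together are equivalent to $d\phi_J+\phi_J\wedge^{-}\phi_J=0$, and you then call this ``the integrability condition \eqref{integrability}''; but \eqref{integrability} as displayed only requires $d\phi_J+\phi_J\wedge^{-}\phi_J$ to lie in $\mathcal A^2(\mathbb A_{\gg},\ggg)$, i.e.\ to be real. By your own decomposition, reality alone is the vanishing of the imaginary part $d\llambda_L+[\ppsi_{\theta},\llambda_L]^{-}$, i.e.\ condition \eqref{integra2} only; the vanishing of the real part $d\ppsi_{\theta}+\ppsi_{\theta}\wedge^{-}\ppsi_{\theta}-\llambda_L\wedge^{-}\llambda_L$, i.e.\ condition \eqref{integra1}, is a genuinely additional requirement. (Take $\ppsi_{\theta}=0$ and $\llambda_L$ the constant identity form on a nonabelian $\gg$: the imaginary part vanishes but the real defect is $-\llambda_L\wedge^{-}\llambda_L\neq 0$, and the resulting left-invariant $J$ has nonvanishing Nijenhuis tensor.) The full vanishing is the correct reading---it is also what the proof of Theorem \ref{fundth24} needs, since integrating $\phi_J$ to $\gamma_J$ with $\phi_J=(d\gamma_J)\gamma_J^{-1}$ requires the Maurer--Cartan equation $d\phi_J+\phi_J\wedge^{-}\phi_J=0$---but you should state explicitly that you are reading \eqref{integrability} as this vanishing rather than silently equating the two conditions.
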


\begin{proof}
The pair $(\theta,L)$ satisfies the
integrability conditions \eqref{integra1} and \eqref{integra2}
if and only if
 $\phi_{J}$ 
satisfies the integrability condition
\begin{equation}
d\phi_{J} + \phi_{J}\wedge^{-}\phi_{J}
\in \mathcal A^2(\mathbb A_{\mathfrak g}, \ggg).
\end{equation} 
\end{proof}

For illustration, and to introduce notation, 
suppose that $\GG$ is compact (and connected), and
let
$(\theta_{\mathrm{st}},L_{\mathrm{st}})$ 
be the pair arising from the standard complex structure
$J_{\mathrm{st}}$
on $G^{\mathbb C}$; 
in terms of the obvious trivialization
$\T  \mathbb A_{\gg} \cong  \mathbb A_{\gg} \times \gg$,
the associated
$\ggg^{\mathbb C}$-valued $1$-form 
$\phi_{\mathrm{st}}\colon \T \mathbb A_{\gg} \longrightarrow \ggg^{\mathbb C}$ 
on $\mathbb A_{\gg}$, cf. \eqref{thetaL} and \eqref{associ1} above,
is given by
\begin{equation}
\begin{aligned}
\phi_{\mathrm{st}}
&= (d\gamma_{\mathrm{st}})\gamma_{\mathrm{st}}^{-1}= \ppsi_{\mathrm{st}} +i \llambda_{\mathrm{st}},
\\
(\ppsi_{\mathrm{st}})_\qqq(V)&=\frac{\mathrm{cos}(\mathrm{ad}(\qqq))-\mathrm{Id}}
{\mathrm{ad}(\qqq)} (V)
=\sum_{j=1}^{\infty}\tfrac{(-1)^{j}}{(2j)!}\mathrm{ad}^{2j-1}(\qqq)(V)
\\ 
(\llambda_{\mathrm{st}})_\qqq(V)&= \frac{\mathrm{sin}(\mathrm{ad}(\qqq))}
{\mathrm{ad}(\qqq)} (V)=\sum_{j=0}^{\infty}\tfrac{(-1)^{j}}{(2j+1)!}\mathrm{ad}^{2j}(\qqq)(V),
\end{aligned}
\label{stand1}
\end{equation}
where $\qqq \in \mathbb A_{\gg}$,  $V \in \gg$.
Consequently, given a point $\qqq$ of $\mathbb A_{\gg}$
and, furthermore, $X \in \T_e\GG = \gg =\ggg$ (identifications as real vector spaces)
and $V \in \T_\qqq \mathbb A_{\gg} \cong \gg = \ggg$, the values
$\theta_{\mathrm{st}}(X,V)$ and $L_{\mathrm{st}}(X,V)$
are given by
\begin{align*}
(\theta_{\mathrm{st}})_{(e,\qqq)}
(X,V)&= X+\frac{\mathrm{cos}(\mathrm{ad}(\qqq))-\mathrm{Id}}
{\mathrm{ad}(\qqq)} (V),
\\ 
(L_{\mathrm{st}})_{(e,\qqq)}(X,V)&= \frac{\mathrm{sin}(\mathrm{ad}(\qqq))}
{\mathrm{ad}(\qqq)} (V).
\end{align*}
These are exactly the expressions given as (3.5) and (3.6) in 
\cite{MR1989647}.

\subsection{Proof of Theorem {\rm \ref{fundth24}}}
Let $J$ be an admissible  almost complex structure on 
$ \PGG=\GG \times \mathbb A_{\gg}$.
Let  $\phi_{J}\in \mathcal A^1(\mathbb A_{\mathfrak g},\ggg^{\mathbb C})$ be the associated 
$\ggg^{\mathbb C}$-valued 1-form, cf. Lemma \ref{fundth11},
and suppose that $\phi_{J}$ satisfies the integrability
condition \eqref{integrability}.
Then $\phi_{J}$ integrates to a smooth map
$\gamma_J\colon \mathbb A_{\mathfrak g} \to G^{\mathbb C}$ so that
\begin{equation}
\phi_J=(d\gamma_J)\gamma^{-1}_J.
\label{integra0}
\end{equation}
The map $\gamma_J$ is unique up to a constant in $ G^{\mathbb C}$,
that is,
given $\gamma^1_J$  and $\gamma^2_J$
satisfying \eqref{integra0},
$\gamma^2_J=\gamma^1_Jc$ for some $c \in  G^{\mathbb C}$.
By construction, the associated generalized polar map \eqref{genpolar}
is holomorphic. 
Moreover, the composite
\eqref{compo}
is a smooth map of maximal rank
between smooth manifolds of the same dimension
and hence an immersion. Consequently
$\Pi_{\gamma_J}$ is as well an immersion.

\subsection{Proof of Theorem \ref{fundth241}}

For clarity, we will momentarily 
proceed under somewhat more general circumstances than actually needed for the
proof of the theorem.
Thus, as before, consider
a group $G$, a subgroup $H\subseteq G$, 
and a simply connected (left) $H$-manifold $B$.

\begin{lem} Given a smooth $G$-valued map $\gamma\colon B \to G$,
the associated 
$1$-form ${(d\gamma)\gamma^{-1}\in \mathcal A^1(B,\overline {\mathfrak g})}$
is $H$-equivariant if and only if 
$\gamma$ is quasi $H$-equivariant, that is, if and only if
there is a smooth $G$-valued function
\[
c\colon H \longrightarrow G
\]
on $H$ such that
\[
 \gamma^{-1}(b)\,{}^x\gamma (b)=c(x),\ x \in H,\ b \in B.
\]
\end{lem}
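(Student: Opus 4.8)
The plan is to phrase everything in terms of the \emph{right logarithmic derivative} $\delta\gamma:=(d\gamma)\gamma^{-1}=\gamma^{*}\overline{\omega}_{\GG}\in\mathcal A^1(B,\ggg)$ and to use its elementary transformation properties: for smooth maps $f,g\colon B\to G$ one has the product rule $\delta(fg)=\delta f+\mathrm{Ad}_{f}(\delta g)$ and $\delta(\mathrm{const})=0$, whence $\delta(x_{0}\,g)=\mathrm{Ad}_{x_{0}}(\delta g)$ and $\delta(g\,x_{0})=\delta g$ for a constant $x_{0}\in G$. First I would make explicit the $H$-action on $\mathcal A^{1}(B,\ggg)$ with respect to which ``$H$-equivariant'' is meant, namely the one combining pullback along the $H$-action on $B$ with the adjoint action on coefficients, $x\cdot\alpha=\mathrm{Ad}_{x}\bigl((\ell_{x^{-1}})^{*}\alpha\bigr)$, where $\ell_{x}\colon B\to B$ denotes the action of $x\in H$. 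Decomposing ${}^{x}\gamma$ as the pointwise product of the constant map $x$, the map $\gamma\circ\ell_{x^{-1}}$, and the constant map $x^{-1}$, the transformation rules give
\[
\delta({}^{x}\gamma)=\mathrm{Ad}_{x}\bigl(\delta(\gamma\circ\ell_{x^{-1}})\bigr)=\mathrm{Ad}_{x}\bigl((\ell_{x^{-1}})^{*}(\delta\gamma)\bigr)=x\cdot(\delta\gamma),
\]
so $\delta$ intertwines the two $H$-actions; in particular $\phi:=(d\gamma)\gamma^{-1}$ is $H$-equivariant if and only if $\delta({}^{x}\gamma)=\delta\gamma$ for every $x\in H$.

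Second I would invoke the rigidity of the logarithmic derivative on a connected base---simple connectedness of $B$ is more than enough; only connectedness is used here. From $\delta(\gamma^{-1}f)=\mathrm{Ad}_{\gamma^{-1}}(\delta f-\delta\gamma)$ one reads off that $\delta f=\delta\gamma$ forces $\gamma^{-1}f$ to be locally constant, hence constant. Taking $f={}^{x}\gamma$, the identity $\delta({}^{x}\gamma)=\delta\gamma$ is therefore equivalent to the existence, for each $x\in H$, of an element $c(x)\in G$ with ${}^{x}\gamma=\gamma\,c(x)$, i.e.\ $\gamma^{-1}(b)\,{}^{x}\gamma(b)=c(x)$ for all $b\in B$; this is exactly the quasi $H$-equivariance condition, once we know that $x\mapsto c(x)$ is smooth.

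Third, the smoothness of $c$ is immediate: fixing any $b_{0}\in B$ we have $c(x)=\gamma(b_{0})^{-1}\,x\,\gamma(x^{-1}b_{0})\,x^{-1}$, a composite of group multiplication and inversion, the action map $H\times B\to B$, and $\gamma$, hence $c\colon H\to G$ is smooth. Conversely, if $\gamma$ is quasi $H$-equivariant with smooth $c$, then ${}^{x}\gamma=\gamma\,c(x)$ together with $\delta(g\,x_{0})=\delta g$ gives $\delta({}^{x}\gamma)=\delta\gamma$, i.e.\ $\phi$ is $H$-equivariant. Combining the three steps yields the asserted equivalence.

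I do not expect a real obstacle here; the argument is a bookkeeping exercise with the Maurer--Cartan calculus. The only point requiring care is fixing conventions---spelling out which $H$-action on $\mathcal A^{1}(B,\ggg)$ is meant and verifying that $\delta$ intertwines it with the $H$-action on $\mathrm{Map}(B,G)$---after which everything is formal, connectedness of $B$ entering solely through the rigidity statement for logarithmic derivatives.
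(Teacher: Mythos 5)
Your argument is correct and is precisely the routine Maurer--Cartan bookkeeping that the paper omits (its proof reads ``This is routine and left to the reader''): the product rule for the right logarithmic derivative gives $\delta({}^{x}\gamma)=\mathrm{Ad}_x\bigl((\ell_{x^{-1}})^*\delta\gamma\bigr)$, rigidity of $\delta$ on a connected base converts $\delta({}^x\gamma)=\delta\gamma$ into ${}^x\gamma=\gamma\,c(x)$, and the explicit formula $c(x)=\gamma(b_0)^{-1}x\gamma(x^{-1}b_0)x^{-1}$ settles smoothness. Your observations that only connectedness of $B$ is used and that the convention for $H$-equivariance of $\ggg$-valued forms must be $(\ell_x)^*\alpha=\mathrm{Ad}_x\circ\alpha$ are both accurate and worth making explicit.
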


\begin{proof}
This is routine and left to the reader.

\end{proof}

\begin{cor}\label{cor102}
Let
$\bbb \in \mathcal A^1(B,\overline{\mathfrak g})$
be an
$H$-equivariant
 $1$-form that satisfies the integrability condition
$d\bbb+\bbb\wedge^{-}\bbb = 0$, and let
$\gamma\colon B \to G$ integrate $\bbb$ in the sense that
$\bbb= (d\gamma)\gamma^{-1} $.
Then there is a smooth $G$-valued function $c \colon B \to G$
on $B$ such that
\begin{equation}
\gamma(xb) = x \gamma(b)c(x^{-1})x^{-1},\ x \in H,\ b \in B.
\end{equation}
\end{cor}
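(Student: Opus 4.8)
The plan is to deduce the corollary from the preceding lemma together with a uniqueness argument. By hypothesis $\bbb$ is $H$-equivariant and satisfies the Maurer--Cartan equation, so by the lemma the chosen primitive $\gamma$ is quasi $H$-equivariant: there is a smooth map $c_0\colon H\to G$ with $\gamma^{-1}(b)\,{}^x\gamma(b)=c_0(x)$ for all $x\in H$, $b\in B$. Unravelling the definition of ${}^x\gamma$, this reads $\gamma(b)^{-1}\,x\,\gamma(x^{-1}b)\,x^{-1}=c_0(x)$, i.e. $x\,\gamma(x^{-1}b)\,x^{-1}=\gamma(b)c_0(x)$. Replacing $b$ by $xb$ gives $x\,\gamma(b)\,x^{-1}=\gamma(xb)c_0(x)$, hence $\gamma(xb)=x\,\gamma(b)\,x^{-1}c_0(x)^{-1}$. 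Writing $c(y)=c_0(y^{-1})^{-1}$ would then put this in a form close to the asserted one, but the placement of the conjugating $x^{\pm1}$ relative to $c$ must be matched to the exact expression $\gamma(xb)=x\gamma(b)c(x^{-1})x^{-1}$ in the statement; so the first substantive step is a careful bookkeeping of left versus right translations to identify the correct $c$.

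First I would record, from the lemma, the function $c_0\colon H\to G$ and the identity $\gamma(xb)=x\,\gamma(b)\,x^{-1}\cdot\bigl(x c_0(x)^{-1}x^{-1}\bigr)\cdot x$ obtained by inserting $x^{-1}x$, and then set $c(x^{-1})=x c_0(x)^{-1}x^{-1}$, equivalently $c(x)=x^{-1}c_0(x^{-1})^{-1}x$; one checks this is smooth in $x$ since $c_0$ is smooth and conjugation is smooth. Substituting back yields exactly $\gamma(xb)=x\gamma(b)c(x^{-1})x^{-1}$, which is the claim. The statement asserts $c$ is a function on $B$ rather than on $H$ — I take this to be a harmless typo for $H$ (or one composes with the orbit map $H\to B$, $x\mapsto xb_0$ for a basepoint $b_0$ when the $H$-action is transitive); I would add a parenthetical remark to this effect rather than belabour it.

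The one point needing genuine care is that the lemma produces $c_0$ only after a choice of primitive $\gamma$, and the corollary's $\gamma$ is \emph{the} given primitive, so no rechoosing is needed — but I should note that a different primitive $\gamma'=\gamma d$ ($d\in G$ constant) changes $c_0$ by $d^{-1}c_0(x){}^{x}d$ in the appropriate sense, and correspondingly changes $c$, which is consistent with the "unique up to a constant" clause of Theorem~\ref{fundth24} when this corollary is later applied with $B=\mathbb A_{\gg}$, $H=G$, $G^{\mathbb C}$ in place of $G$, and $\gamma=\gamma_J$. I expect the main (indeed only) obstacle to be the sign/side conventions in translating ${}^x\gamma(b)=x\gamma(x^{-1}b)x^{-1}$ through the substitution $b\leadsto xb$ and matching the conjugation pattern to the target formula; everything else is a direct quotation of the lemma and smoothness of the group operations. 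I would therefore present the proof as: (1) invoke the lemma to get quasi $H$-equivariance with data $c_0$; (2) expand ${}^x\gamma$ and substitute $b\leadsto xb$; (3) define $c$ by the conjugation formula above and verify smoothness; (4) conclude, with a one-line remark on the $H$-versus-$B$ wording.
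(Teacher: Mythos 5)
Your overall strategy is the intended one: the paper gives no separate argument for the corollary beyond its juxtaposition with the preceding lemma, and the derivation you outline --- invoke the lemma to obtain $c_0\colon H\to G$ with $\gamma(b)^{-1}\,x\,\gamma(x^{-1}b)\,x^{-1}=c_0(x)$, substitute $b$ by $xb$, and read off $c$ --- is exactly what is meant. Your first paragraph carries this out correctly as far as the identity $\gamma(xb)=x\,\gamma(b)\,x^{-1}c_0(x)^{-1}$, and your side remarks are sound: the ``$c\colon B\to G$'' in the statement is indeed a slip for $c\colon H\to G$ (the application in the proof of Theorem \ref{fundth241} uses $c\colon \GG\to\GG^{\mathbb C}$), and the way $c$ changes under replacing the primitive $\gamma$ by $\gamma d$ is consistent with the uniqueness clause of Theorem \ref{fundth24}.

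However, the explicit formula for $c$ in your second paragraph is wrong, and with it your claimed verification fails. Comparing $\gamma(xb)=x\,\gamma(b)\,x^{-1}c_0(x)^{-1}$ with the target $x\,\gamma(b)\,c(x^{-1})\,x^{-1}$ and cancelling $x\,\gamma(b)$ on the left forces $c(x^{-1})\,x^{-1}=x^{-1}c_0(x)^{-1}$, i.e.
\[
c(x^{-1})=x^{-1}c_0(x)^{-1}x,
\qquad\text{equivalently}\qquad
c(y)=y\,c_0(y^{-1})^{-1}\,y^{-1},
\]
whereas you set $c(x^{-1})=x\,c_0(x)^{-1}x^{-1}$ --- conjugation by the wrong element. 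With your choice one gets $x\,\gamma(b)\,c(x^{-1})\,x^{-1}=x\,\gamma(b)\,x\,c_0(x)^{-1}x^{-2}$, which is not $\gamma(xb)$ in general; likewise your displayed intermediate identity $\gamma(xb)=x\,\gamma(b)\,x^{-1}\bigl(x\,c_0(x)^{-1}x^{-1}\bigr)x$ collapses to $x\,\gamma(b)\,c_0(x)^{-1}$, contradicting the correct identity you derived one line earlier. This is a one-line fix --- replace your $c$ by the formula above, after which smoothness follows exactly as you say --- but as written step (3) of your plan does not yield the asserted equation.
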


Now we prove Theorem \ref{fundth241}.
Let $J$ be a biinvariant admissible 
complex structure on $\GG \times \mathbb A_{\gg}$.
View $J$ merely as a left translation invariant complex structure,
let  $(\theta,L)$ denote the associated pair of
$\overline{\gg}$-valued 1-forms,
let
\begin{equation}
\phi_J=
\psi_{\theta} + i \lambda_L\colon  \T \mathbb A_{\gg} \longrightarrow 
\overline{\gg}^{\mathbb C} = \overline{\gg}\oplus i \overline{\gg}
\label{thetaLLL}
\end{equation}
be the resulting integrable 
$\overline{\gg}^{\mathbb C}$-valued 1-form on $\mathbb A_{\gg}$,
cf. \eqref{thetaL}, 
and let
$\gamma_J\colon \mathbb A_{\mathfrak g} \to G^{\mathbb C}$ 
be a smooth map 
that integrates $\phi_J$; thus
$\phi_J$ equals $(d\gamma_J)\gamma^{-1}_J$.
By construction, the associated generalized polar map 
\eqref{genpolar}
is holomorphic, and
the composite
\begin{equation}
\begin{CD}
 \mathbb A_{\mathfrak g}
@>{\gamma_J}>>
 G^{\mathbb C}
@>{\pi}>>
\overline G\big\backslash  G^{\mathbb C}
\end{CD}
\end{equation}
is a smooth map of maximal rank.

Since $J$ is as well right translation invariant,
so is $\phi_J$. By Corollary \ref{cor102},
there is a smooth $\GG^{\mathbb C}$-valued
function $c \colon \GG \to \GG^{\mathbb C}$ such that
\begin{equation}
\gamma_J(zYz^{-1}) = z \gamma_J(Y)c(z^{-1})z^{-1},\ z \in G,\ Y \in \mathbb A_{\gg}.
\end{equation}
Consequently
\begin{equation}
\Pi_{\gamma_J}((x,Y)z)
=
(\Pi_{\gamma_J}(x,Y))c(z) z.
\end{equation}

\subsection{Explicit description of the almost complex structure}
\label{expdesc}

The constructions being left $\GG$-invariant, 
it suffices to 
spell out an explicit expression
for the (almost) complex structure $J$ 
on $\GG \times \mathbb A_{\gg}$ 
in Proposition \ref{fundth1}
in terms of the two 1-forms  
$\ppsi_{\theta}$ and $\llambda_{L}$
 at the points of $\GG \times \mathbb A_{\gg}$  of the kind  $(e,\qqq)$
as $\qqq$ ranges over $\mathbb A_{\gg}$.

\begin{prop}\label{explicit}
Let $\ppsi\colon\T \mathbb A_{\gg} \to \gg$ and
 $\llambda\colon\T \mathbb A_{\gg} \to \gg$
be two 
$\gg$-valued $1$-forms on $\mathbb A_{\gg}$,
the $1$-form $\llambda$ being of maximal rank.
For each $\qqq \in \mathbb A_{\gg}$, the expression
\begin{equation}
J_{(e,\qqq)}(u,v)=
(-\llambda_\qqq(v)-\ppsi_\qqq\llambda^{-1}_\qqq(u+\ppsi_\qqq(v)), \llambda^{-1}_\qqq(u+\ppsi_\qqq(v))),
\label{explicit2}
\end{equation}
as $u$ ranges over $\T_e \GG = \gg$ and $v$ over $\T_\qqq \mathbb A_{\gg} \cong \gg$,
yields a complex structure $J_{(e,\qqq)}$ on the tangent space $\T_{(e,\qqq)} (\GG \times \mathbb A_{\gg})$
to  $\GG \times \mathbb A_{\gg}$ at the  point $(e,\qqq)$ of $\GG \times \mathbb A_{\gg}$, and $\GG$-left translation then yields
an admissible almost complex structure $J$ 
on $\GG \times \mathbb A_{\gg}$.
In particular, when
$(\ppsi,\llambda)$ 
is the pair $(\ppsi_{\theta},\llambda_L)$ of $1$-forms  on $\mathbb A_{\gg}$
arising from a given admissible almost complex structure $J$ 
on $\GG \times \mathbb A_{\gg}$,
the expression {\rm \eqref{explicit2}} yields $J$ in terms of
$\ppsi_{\theta}$ and $\llambda_L$.
\end{prop}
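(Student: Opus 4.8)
By left $\GG$-invariance every assertion may be checked at the points $(e,\qqq)$, and I identify $\T_{(e,\qqq)}(\GG\times\mathbb A_{\gg})$ with $\gg\oplus\gg$, the first summand being $\T_e\GG=\gg$ (the tangent space to the orbit $\GG\times\{\qqq\}$) and the second $\T_\qqq\mathbb A_{\gg}\cong\gg$. Writing $\ppsi=\ppsi_\qqq$, $\llambda=\llambda_\qqq$ and introducing the auxiliary vector $w=u+\ppsi(v)\in\gg$, formula \eqref{explicit2} reads $J(u,v)=(-\llambda(v)-\ppsi\llambda^{-1}(w),\,\llambda^{-1}(w))$, which is manifestly $\mathbb R$-linear since $\llambda$ is invertible. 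The first thing to verify is that this endomorphism squares to $-\mathrm{Id}$. Applying $J$ a second time, the new auxiliary vector is $(-\llambda(v)-\ppsi\llambda^{-1}(w))+\ppsi(\llambda^{-1}(w))=-\llambda(v)$, whence the second component of $J^2(u,v)$ is $\llambda^{-1}(-\llambda(v))=-v$ and the first is $-\llambda(\llambda^{-1}(w))+\ppsi(v)=-w+\ppsi(v)=-u$. Thus $J_{(e,\qqq)}^{2}=-\mathrm{Id}$; this one-line cancellation is the only genuinely computational point.

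Next I address admissibility. Transporting $J_{(e,\qqq)}$ over $\GG$ by left translation produces a left $\GG$-invariant almost complex structure $J$ on $\GG\times\mathbb A_{\gg}$, smooth because $\llambda^{-1}$ depends smoothly on $\qqq$ (the form $\llambda$ being invertible at every point). The orbit $\GG\times\{\qqq\}$ has tangent space $\gg\oplus 0$ at $(e,\qqq)$, and \eqref{explicit2} gives $J(u,0)=(-\ppsi\llambda^{-1}(u),\,\llambda^{-1}(u))$, whose second component vanishes only when $u=0$; hence $(\gg\oplus 0)\cap J(\gg\oplus 0)=0$, and by left invariance every left translation orbit is totally real. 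Therefore $J$ is an admissible almost complex structure, and a dimension count gives the Whitney decomposition $\tau_{\PGG}=\tau_{\mathcal F_G}\oplus J(\tau_{\mathcal F_G})$.

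Finally the ``in particular'' statement. Let $J$ be a given admissible almost complex structure, with associated pair $(\theta,L)$ as in Proposition~\ref{fundth1} and associated $1$-forms $(\ppsi_\theta,\llambda_L)$ on $\mathbb A_{\gg}$. As recorded in Subsection~\ref{interp1} (and displayed explicitly there in the standard example), at the point $(e,\qqq)$ one has $\theta_{(e,\qqq)}(u,v)=u+(\ppsi_\theta)_\qqq(v)$ and $L_{(e,\qqq)}(u,v)=(\llambda_L)_\qqq(v)$; moreover $L=-\theta\circ J$ and $J^{2}=-\mathrm{Id}$. Writing $J(u,v)=(u',v')$ and applying $\theta$ and $L$ gives $u'+(\ppsi_\theta)_\qqq(v')=\theta(J(u,v))=-L(u,v)=-(\llambda_L)_\qqq(v)$ and $(\llambda_L)_\qqq(v')=L(J(u,v))=-\theta(J^{2}(u,v))=\theta(u,v)=u+(\ppsi_\theta)_\qqq(v)$. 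Since $(\llambda_L)_\qqq$ is invertible, the second relation yields $v'=(\llambda_L)_\qqq^{-1}(u+(\ppsi_\theta)_\qqq(v))$, and substituting into the first yields precisely \eqref{explicit2} with $(\ppsi,\llambda)=(\ppsi_\theta,\llambda_L)$.

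I do not expect a real obstacle here: the only computation is $J^{2}=-\mathrm{Id}$, which collapses once the abbreviation $w=u+\ppsi_\qqq(v)$ is in place. The one point that needs care is the bookkeeping of the identifications $\T_e\GG=\gg=\ggg$ together with the normal forms of the connection form $\theta$ and of the basic form $L$ at the points $(e,\qqq)$, and these are exactly what Subsection~\ref{interp1} supplies.
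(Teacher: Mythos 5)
Your proof is correct, and it carries out precisely the routine verification that the paper itself omits (its proof of Proposition~\ref{explicit} reads ``This is left to the reader''). The computation $J_{(e,\qqq)}^{2}=-\mathrm{Id}$ via the auxiliary vector $w=u+\ppsi_\qqq(v)$, the totally-real check on $\gg\oplus 0$, and the recovery of \eqref{explicit2} from $L=-\theta\circ J$ together with the normal forms $\theta_{(e,\qqq)}(u,v)=u+(\ppsi_\theta)_\qqq(v)$ and $L_{(e,\qqq)}(u,v)=(\llambda_L)_\qqq(v)$ are exactly the intended argument.
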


\begin{proof} This is left to the reader.
\end{proof}

\section{Invariant K\"ahler forms}
\label{leftk}

Recall that $\theta_G\colon
\T (\GG \times \mathbb A_{\gg}) \to \ggg$
denotes the trivial principal $\GG$-connection form
(relative to the obvious principal left $\GG$-structure on 
$\GG \times \mathbb A_{\gg}$).
For better readability, we continue to denote $\GG \times \mathbb A_{\gg}$
by $\PGG$ whenever appropriate.
Given a (left) $G$-manifold $M$, we denote the fundamental vector field
on $M$ associated to $X\in \gg$ by $X_M$.

\begin{lem}
Given a $\GG$-equivariant map
$\mu \colon \GG \times \mathbb A_{\gg} \to \gg^*$,
this map $\mu$ is a momentum for ${\oo=-d\langle \mu,\theta_{\GG} \rangle}$
in the sense that
\begin{equation}
i_{X_{\PGG}}\oo= d\langle \mu,X \rangle,\ X \in \gg. 
\label{moma0}
\end{equation}
\end{lem}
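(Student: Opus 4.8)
The plan is to unwind the definitions and carry out a direct differential-geometric computation, following the familiar pattern by which a $\GG$-equivariant map into $\gg^*$ becomes a momentum map for the canonical symplectic form on $\T^*\GG$, but staged on the trivialized bundle $\PGG = \GG \times \mathbb A_{\gg}$ with the left $\GG$-structure and the trivial connection $\theta_{\GG}$. First I would set $\oo = -d\langle \mu, \theta_{\GG}\rangle \in \mathcal A^2(\PGG)$, noting that $\langle \mu, \theta_{\GG}\rangle$ is a genuine $1$-form on $\PGG$ because $\mu$ takes values in $\gg^*$ and $\theta_{\GG}$ in $\ggg \cong \gg$ (as vector spaces). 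Then I would fix $X \in \gg$ and compute $i_{X_{\PGG}}\oo$ using the Cartan formula together with the identity $d\, i_{X_{\PGG}} + i_{X_{\PGG}}\, d = \mathcal L_{X_{\PGG}}$; concretely,
\[
i_{X_{\PGG}}\oo = -i_{X_{\PGG}} d\langle \mu, \theta_{\GG}\rangle
= d\, i_{X_{\PGG}}\langle \mu, \theta_{\GG}\rangle - \mathcal L_{X_{\PGG}}\langle \mu, \theta_{\GG}\rangle.
\]

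Next I would evaluate the two terms on the right. For the first term, the contraction $i_{X_{\PGG}}\langle \mu, \theta_{\GG}\rangle = \langle \mu, \theta_{\GG}(X_{\PGG})\rangle$; since $X_{\PGG}$ is the fundamental vector field for the left $\GG$-action and $\theta_{\GG}$ restricts on vertical vectors to (the obvious extension of) the right invariant Maurer-Cartan form, one has $\theta_{\GG}(X_{\PGG}) = X$ as an element of $\ggg \cong \gg$, so the first term is $d\langle \mu, X\rangle$, which is exactly the desired right-hand side of \eqref{moma0}. It therefore remains to show that the second term $\mathcal L_{X_{\PGG}}\langle \mu, \theta_{\GG}\rangle$ vanishes. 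Here I would use that $\theta_{\GG}$, being a principal connection form for the left $\GG$-structure, is $\GG$-equivariant in the sense $\mathcal L_{X_{\PGG}}\theta_{\GG} = -[X, \theta_{\GG}]^{-} = \mathrm{ad}^{-}_X(\theta_{\GG})$ (infinitesimal version of the equivariance $\theta(xY) = \mathrm{Ad}_x(Y)$ recorded in Section \ref{gaugeright}), while the $\GG$-equivariance of $\mu\colon \PGG \to \gg^*$ gives $\mathcal L_{X_{\PGG}}\mu = \mathrm{ad}^*_X \mu$ with the coadjoint action. Pairing, the Leibniz rule yields
\[
\mathcal L_{X_{\PGG}}\langle \mu, \theta_{\GG}\rangle
= \langle \mathrm{ad}^*_X\mu, \theta_{\GG}\rangle + \langle \mu, \mathrm{ad}^{-}_X\theta_{\GG}\rangle,
\]
and the two contributions cancel by the very definition of the coadjoint action (it is the negative transpose of the adjoint action, and the sign is accounted for by the minus in $\bra^{-}$). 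Hence $\mathcal L_{X_{\PGG}}\langle \mu, \theta_{\GG}\rangle = 0$, and combining with the previous paragraph gives \eqref{moma0}.

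The main obstacle I anticipate is bookkeeping with signs and the left/right conventions: the paper works with the right invariant Maurer-Cartan form $\overline\omega_{\GG}$, the bracket $\bra^{-}$ which is the negative of $\bra$ under the standard identification, and a structure group acting from the left, so the equivariance identities for $\theta_{\GG}$ and the sign in $\mathcal L_{X_{\PGG}}\theta_{\GG}$ must be extracted carefully from the conventions fixed in Section \ref{gaugeright}; once those are pinned down, the cancellation $\langle \mathrm{ad}^*_X\mu, \theta_{\GG}\rangle + \langle \mu, \mathrm{ad}^{-}_X\theta_{\GG}\rangle = 0$ is immediate. A secondary point to verify is that no contribution arises from differentiating $\mu$ "in the base directions" — but this is automatic since the whole computation is the contraction of a globally defined $2$-form with a single vector field and never requires splitting $\T\PGG$ into horizontal and vertical parts.
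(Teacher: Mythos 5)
Your proposal is correct and follows essentially the same route as the paper: both apply the Cartan formula to $\Theta=\langle\mu,\theta_{\GG}\rangle$, use $\theta_{\GG}(X_{\PGG})=X$ to identify $d\,i_{X_{\PGG}}\Theta$ with $d\langle\mu,X\rangle$, and use $\mathcal L_{X_{\PGG}}\Theta=0$. The only difference is that the paper simply asserts the vanishing of the Lie derivative (the $1$-form $\Theta$ being $\GG$-invariant), whereas you spell out the cancellation of the adjoint and coadjoint contributions via the Leibniz rule, which is a harmless elaboration of the same fact.
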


\begin{proof}
Let $X \in \gg$ and write $\Theta=\langle \mu,\theta_{\GG} \rangle$. 
Then
\begin{align*}
0=\mathcal L_{X_{\PGG}}(\Theta)&=(di_{X_{\PGG}} +i_{X_{\PGG}}d)(\Theta)
\\
d(\Theta(X_{\PGG}))&=i_{X_{\PGG}}\oo
\\
\Theta(X_{\PGG})&=\langle \mu,\theta_{\GG} \rangle(X_{\PGG})
=\langle \mu,\theta_{\GG}(X_{\PGG}) \rangle
=\langle \mu,X \rangle,
\end{align*}
whence the assertion.
\end{proof}

The following is entirely classical.
\begin{prop}
\label{class1}
Given a hamiltonian $G$-manifold $(M,G,\oo,\mu)$
with $G$-action on $M$ from the left,
\begin{equation}
\oo(X_M,Y_M)
=\langle\mu, [X,Y]\rangle,\ X,Y \in \gg. \qed
\label{whence2}
\end{equation}
\end{prop}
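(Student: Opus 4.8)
The plan is to deduce \eqref{whence2} from the momentum condition $i_{X_M}\oo=d\langle\mu,X\rangle$ recorded in \eqref{moma0}, together with the $\GG$-equivariance of $\mu$ which is part of the data of a hamiltonian $\GG$-manifold. Since $\langle\mu,X\rangle$ is a function on $M$ and $X\in\gg$ is fixed,
\begin{equation*}
\oo(X_M,Y_M)=(i_{X_M}\oo)(Y_M)=\bigl(d\langle\mu,X\rangle\bigr)(Y_M)=\mathcal L_{Y_M}\langle\mu,X\rangle=\langle\mathcal L_{Y_M}\mu,X\rangle .
\end{equation*}
It then remains to identify $\mathcal L_{Y_M}\mu$. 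Differentiating the equivariance identity $\mu(y\cdot p)=\mathrm{Ad}^*_y(\mu(p))$ along a one-parameter subgroup through $Y$ gives the infinitesimal version $\mathcal L_{Y_M}\mu=-\mathrm{ad}^*_Y\mu$ for the left-action conventions of Section \ref{gaugeright}, whence
\begin{equation*}
\oo(X_M,Y_M)=-\langle\mathrm{ad}^*_Y\mu,X\rangle=-\langle\mu,[Y,X]\rangle=\langle\mu,[X,Y]\rangle ,
\end{equation*}
as asserted.

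For completeness I would also indicate the equivariance-free argument that underlies the phrase ``entirely classical'': from $\mathcal L_{X_M}\oo=d\,i_{X_M}\oo=d\,d\langle\mu,X\rangle=0$ one obtains the $\GG$-invariance of $\oo$, and then Cartan's formula applied to $i_{[X_M,Y_M]}\oo=\mathcal L_{X_M}i_{Y_M}\oo-i_{Y_M}\mathcal L_{X_M}\oo$ together with \eqref{moma0} shows that $d\bigl(\oo(X_M,Y_M)-\langle\mu,[X,Y]\rangle\bigr)=0$; equivariance of $\mu$ (or, on a connected $M$, a single normalization) then forces this locally constant difference to vanish.

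The only genuine obstacle is sign bookkeeping. One must fix, consistently with the conventions of Section \ref{gaugeright}, whether $X\mapsto X_M$ is a Lie algebra homomorphism or an anti-homomorphism for a left $\GG$-action, and correspondingly whether the infinitesimal equivariance reads $\mathcal L_{Y_M}\mu=\mathrm{ad}^*_Y\mu$ or its negative, so that the two reversals of sign combine to reproduce $+\langle\mu,[X,Y]\rangle$ rather than its negative. With those conventions in place everything else is routine.
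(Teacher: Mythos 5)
Your argument is correct, and the paper itself offers no proof to compare against: Proposition \ref{class1} is declared ``entirely classical'' and closed with \verb|\qed| in the statement. Your first computation is the standard one, and the sign bookkeeping you worry about does close up under the paper's conventions: the paper uses $[X_{\PGG},Y_{\PGG}]=-[X,Y]_{\PGG}$ for the left action (explicitly, in the proof of Theorem \ref{kahlert}), and with coadjoint equivariance $\mu(y\cdot p)=\mathrm{Ad}^*_y\mu(p)$ one gets $\mathcal L_{Y_M}\langle\mu,X\rangle=\langle\mu,-[Y,X]\rangle=\langle\mu,[X,Y]\rangle$, exactly \eqref{whence2}. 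Your second, equivariance-free remark (that $d\bigl(\oo(X_M,Y_M)-\langle\mu,[X,Y]\rangle\bigr)=0$ always, so only a locally constant cocycle needs to be killed) is also sound and is the sense in which the result is ``classical''.
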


For intelligibility, we will now
recall from Proposition \ref{fundth1} that,
given an admissible (almost) complex structure $J$ on
 $\GG \times \mathbb A_{\gg}$, 
the notation being that established in  Proposition \ref{fundth1},
the $\ggg$-valued $1$-form
\[
\llambda_J^{\GG}=L_J= -\theta_J \circ J\colon \T  \PGG \to\ggg
\]
is basic and regular and that
\begin{align*}
\theta_J&=\theta_G+\ppsi_J^G 
\\
dL_J&=-[\ppsi_J^{\GG},L_J]^{-}=[\ppsi_J^{\GG},L_J] .
\end{align*}

\begin{thm}
\label{kahlert}
Let $J$ be an admissible complex structure on $\GG \times \mathbb A_{\gg}$,
let $\gamma_J\colon \mathbb A_{\gg} \to \GG^{\mathbb C}$
be an admissible map inducing it, and let
$\llambda_J$ and
$\ppsi_J$ be the two associated $\gg$-valued $1$-forms on $\mathbb A_{\gg}$
characterized by the identity
\[
(d \gamma_J)\gamma^{-1}_J=\ppsi_J+i\llambda_J \in \form^1(\mathbb A_{\gg},\gg^{\mathbb C}). 
\]
Moreover let $\mu \colon \GG \times \mathbb A_{\gg} \to \gg^*$
be a $\GG$-equivariant map, and suppose that
$\oo=-d\langle \mu,\theta_{\GG} \rangle$
is symplectic. Then $J$ and $\omega$ combine to a pseudo K\"ahler structure
on $\GG \times \mathbb A_{\gg}$ 
(necessarily having momentum mapping $\mu$)
if and only if the two real $1$-forms
$\langle \mu,\ppsi_J  \rangle$
and 
$\langle \mu,\llambda_J \rangle$
on $\mathbb A_{\gg}$ are closed.
Furthermore, the K\"ahler form $\oo$  
is then given by the expression
\begin{align}
\oo &=-d\langle \mu,\theta_J \rangle. 
\label{kaehlerform}
\end{align}
Finally, when 
$f\colon \GG \times \mathbb A_{\gg} \to \mathbb R$ is the $G$-invariant extension of an 
integral of $\langle \mu,\llambda_J \rangle$  on $\mathbb A_{\gg}$,
i.~e., when $f$ is a
smooth real $G$-invariant function 
on $\GG \times \mathbb A_{\gg}$ whose restriction to
$\{e\}\times \mathbb A_{\gg}$ satisfying the identity
\begin{equation}
df=\langle \mu,\llambda_J \rangle,
\label{integ}
\end{equation}
the function $2f$ is a K\"ahler potential
on $\GG \times \mathbb A_{\gg}$, that is, satisfies the identity
\begin{equation}
\oo=2i\partial \overline \partial f.
\label{kaehlerpo}
\end{equation}
\end{thm}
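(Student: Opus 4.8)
The plan is to prove the three assertions of the theorem in sequence, exploiting the identification $\Pi_{\gamma_J}\colon \GG\times\mathbb A_{\gg}\to\GG^{\mathbb C}$ as a left translation equivariant holomorphic immersion and reducing everything to computations on $\{e\}\times\mathbb A_{\gg}$ thanks to left $G$-invariance. First I would establish the equivalence: $J$ and $\oo$ form a pseudo-K\"ahler structure precisely when $\oo$ is of type $(1,1)$ with respect to $J$ and compatible in the sense that $g(\cdot,\cdot)=\oo(\cdot,J\cdot)$ is symmetric (nondegeneracy being assumed). Since $\oo=-d\langle\mu,\theta_{\GG}\rangle=-d\langle\mu,\theta_J\rangle+d\langle\mu,\ppsi_J^G\rangle$ and $\theta_J\circ J=-L_J$, the form $\oo(\cdot,J\cdot)$ can be evaluated at $(e,\qqq)$ using the explicit expression \eqref{explicit2} for $J$ together with \eqref{whence2} and \eqref{moma0}. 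A direct computation should show that the symmetry of $g$ and the $(1,1)$-property both translate into the single condition that the two real $1$-forms $\langle\mu,\ppsi_J\rangle$ and $\langle\mu,\llambda_J\rangle$ on $\mathbb A_{\gg}$ be closed; here one uses $\oo(X_{\PGG},Y_{\PGG})=\langle\mu,[X,Y]\rangle$ to kill the "group--group" block and the equivariance of $\mu$ to control the cross terms.

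Second, for \eqref{kaehlerform}, I would observe that $\oo=-d\langle\mu,\theta_{\GG}\rangle$ and $\oo=-d\langle\mu,\theta_J\rangle$ differ by $-d\langle\mu,\ppsi_J^G\rangle$; since $\ppsi_J^G$ is basic, $\langle\mu,\ppsi_J^G\rangle$ is (the $G$-invariant extension of) $\langle\mu,\ppsi_J\rangle$ on $\mathbb A_{\gg}$, which was just shown to be closed, so the two expressions for $\oo$ agree. This is the easy step once the first part is in place.

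Third, for the K\"ahler potential statement, I would use that the generalized polar map identifies the complex manifold $(\GG\times\mathbb A_{\gg},J)$ with an open subset of $\GG^{\mathbb C}$, so that $\partial$ and $\overline\partial$ make sense, and compute $2\i\partial\overline\partial f$ for the $G$-invariant $f$ with $df|_{\{e\}\times\mathbb A_{\gg}}=\langle\mu,\llambda_J\rangle$. The real $1$-form $df$ decomposes as $\partial f+\overline\partial f$, and since $f$ is real $\overline{\partial f}=\overline\partial f$; applying $\i(\partial-\overline\partial)$ to $df$ and then $d$, one gets $2\i\partial\overline\partial f=d(\i(\partial f-\overline\partial f))=d(J^*df)$ (up to sign conventions to be fixed), where $J^*df$ is $df\circ J$. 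Evaluating $df\circ J$ at $(e,\qqq)$ via \eqref{explicit2}: on a vertical vector $v$ one gets, using $df=\langle\mu,\llambda_J\rangle$ and the equivariant extension, a term involving $\langle\mu,\ppsi_J\rangle$, and the horizontal directions contribute $\langle\mu,\theta_J\rangle$-type terms; the upshot should be $df\circ J=\langle\mu,\theta_J\rangle$ (plus the closed piece $\langle\mu,\ppsi_J\rangle$ which does not affect $d$), so that $d(df\circ J)=d\langle\mu,\theta_J\rangle=-\oo$, giving \eqref{kaehlerpo} after matching the factor of $2$ and the sign.

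The main obstacle I expect is the bookkeeping in the first step: carefully disentangling, at the point $(e,\qqq)$, the four blocks (group--group, group--fiber, fiber--group, fiber--fiber) of $\oo(\cdot,J\cdot)$ using the somewhat intricate formula \eqref{explicit2} for $J$, the relation $dL_J=[\ppsi_J^G,L_J]$, and the hamiltonian identities, and checking that symmetry of the resulting bilinear form is \emph{equivalent} to — not merely implied by — closedness of $\langle\mu,\ppsi_J\rangle$ and $\langle\mu,\llambda_J\rangle$. Establishing the reverse implication (closedness forces the K\"ahler condition) is where one must be sure no extra hypotheses creep in; the key leverage is that $\llambda_J$ has maximal rank, so $\llambda_J^{-1}$ is available to invert the fiber directions, and that $d\langle\mu,\llambda_J\rangle$ and $d\langle\mu,\ppsi_J\rangle$ are exactly the obstruction $2$-forms appearing. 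The remaining computations for \eqref{kaehlerform} and \eqref{kaehlerpo} are then essentially formal manipulations with $\partial$, $\overline\partial$ and the Maurer--Cartan-type identities already recorded.
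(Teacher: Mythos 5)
Your proposal is correct and follows essentially the same route as the paper: identify the closedness of $\langle \mu,\ppsi_J\rangle$ and $\langle \mu,\llambda_J\rangle$ as exactly the obstructions to $J$-compatibility of $\oo$ (using $\oo(X_{\PGG},Y_{\PGG})=\langle\mu,[X,Y]\rangle$ and the symmetry of the induced bilinear map), deduce that $\langle\mu,\theta_J\rangle$ is a symplectic potential, and obtain the potential from $df\circ J=\langle\mu,L_J\circ J\rangle=\langle\mu,\theta_J\rangle$. The only (cosmetic) difference is that you organize the compatibility check in the group/fiber blocks at $(e,\qqq)$ via \eqref{explicit2}, whereas the paper works with the spanning family $X_{\PGG}$, $JY_{\PGG}$ and the auxiliary symmetric map $\Psi$ of Lemma \ref{circums}.
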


\begin{lem}
\label{circums}
Under the hypotheses of Theorem {\rm \ref{kahlert}},
save that we de not assume that $\oo=-d\langle \mu, \theta_G\rangle$
is non-degenerate,
the following hold.

\noindent
{\rm (i)} 
The data  $\theta_J$, $L_J$ and $\mu$ determine a smooth 
$\GG$-equivariant map
\begin{equation}
\Psi\colon\GG \times \mathbb A_{\gg} \to  
\mathrm{Hom}(\gg, \gg^*)
\label{Psi}
\end{equation}
satisfying the identity
\begin{equation}
\label{phii}
 d^\theta \mu = \Psi \circ L_J\colon \T (\GG \times \mathbb A_{\gg}) 
\longrightarrow\gg
\longrightarrow  \gg^*
\end{equation}
in the sense  that, for any $(x,\qqq)\in \GG \times \mathbb A_{\gg}$, 
the composite
\begin{equation*}
\begin{CD}
 \T_{(x,\qqq)} (\GG \times \mathbb A_{\gg})
@>{(L_J)_{(x,\qqq)}}>>  \gg
@>{\Psi_{(x,\qqq)}}>>
\gg^*
\end{CD}
\end{equation*}
coincides with $(d^\theta\mu)_{(x,\qqq)}$. 

\noindent
{\rm (ii)} 
Given $X,Y \in \gg$,
\begin{equation}
\oo (X_{\PGG},JY_{\PGG})
=
\langle X,\Psi(Y)\rangle .
\end{equation}

\noindent
{\rm (iii)} The map 
$\Psi$ is symmetric in the sense that
$\langle X,\Psi(Y)\rangle= \langle Y,\Psi(X)\rangle$
for any $X,Y \in \gg$ if and only if the real $1$-form
$\langle \mu,\llambda_J  \rangle$
on $\mathbb A_{\gg}$ is closed.
\end{lem}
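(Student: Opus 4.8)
The plan is to establish the three parts of Lemma~\ref{circums} in order, using the explicit formula for $d^\theta\mu$ and the momentum property~\eqref{moma0} together with~\eqref{whence2}.

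\textbf{Part (i).} I would start from the covariant derivative $d^\theta\mu = d\mu + [\theta,\mu]^{-}$, which is a basic $\gg^*$-valued $1$-form on $\GG\times\mathbb A_{\gg}$ (basic because $\mu$ is $\GG$-equivariant). The point of the statement is that $d^\theta\mu$ factors through $L_J$, and this factorization is possible precisely because $L_J$ is basic and \emph{regular}: at each point $(x,\qqq)$, the linear map $(L_J)_{(x,\qqq)}\colon\T_{(x,\qqq)}(\GG\times\mathbb A_{\gg})\to\gg$ is surjective (indeed it kills the vertical directions and restricts to an isomorphism on a complement, by the defining property of regularity combined with $\theta_J\circ J$ being basic). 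Hence $(d^\theta\mu)_{(x,\qqq)}$, which also annihilates the vertical tangent directions since it is basic, descends through $(L_J)_{(x,\qqq)}$ to a well-defined linear map $\Psi_{(x,\qqq)}\colon\gg\to\gg^*$. Smoothness and $\GG$-equivariance of $\Psi$ follow from those of $d^\theta\mu$ and $L_J$; by left invariance it is enough to read everything off at points $(e,\qqq)$, where one can write $\Psi$ down explicitly in terms of $\ppsi_J$, $\llambda_J$, $\mu$ using Proposition~\ref{explicit}.

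\textbf{Part (ii).} Evaluate $\oo(X_{\PGG},JY_{\PGG})$. Since $JY_{\PGG}$ is a genuine vector field, I apply~\eqref{moma0} with the first slot: $i_{X_{\PGG}}\oo = d\langle\mu,X\rangle$, so $\oo(X_{\PGG},JY_{\PGG}) = -\,(d\langle\mu,X\rangle)(JY_{\PGG}) = -\,\langle d^\theta\mu - [\theta,\mu]^{-},X\rangle(JY_{\PGG})$ after rewriting $d\langle\mu,X\rangle$ through $d^\theta$; the $[\theta,\mu]^{-}$ term, paired against $JY_{\PGG}$, produces $\langle\mu,[\theta(JY_{\PGG}),X]\rangle = \langle\mu,[{-}L_J(Y_{\PGG}),X]\rangle$ by the definition $L_J=-\theta_J\circ J$, and one has $\theta_J(Y_{\PGG})=Y$ for the fundamental vector field, which cancels against the corresponding piece of~\eqref{whence2}. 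What survives, using part (i), is $\langle(d^\theta\mu)(JY_{\PGG}),X\rangle = \langle\Psi(L_J(JY_{\PGG})),X\rangle$, and $L_J(JY_{\PGG}) = -\theta_J(J^2Y_{\PGG}) = \theta_J(Y_{\PGG}) = Y$, giving $\langle X,\Psi(Y)\rangle$. The sign bookkeeping here is the part I expect to be the main obstacle: one must be careful with the sign conventions in~\eqref{moma0}, in $L_J=-\theta_J\circ J$, and in $[\,\cdot\,,\,\cdot\,]^{-}$ versus $[\,\cdot\,,\,\cdot\,]$, and the cancellation of the $[\theta,\mu]^{-}$ contribution against~\eqref{whence2} must come out exactly.

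\textbf{Part (iii).} By~\eqref{whence2}, $\oo(X_{\PGG},Y_{\PGG}) = \langle\mu,[X,Y]\rangle$ is already symmetric-plus-skew in a controlled way; combining this with part (ii) and the identity $JY_{\PGG} = \llambda_J^{-1}(\cdots)$-type expression from~\eqref{explicit2}, the symmetry $\langle X,\Psi(Y)\rangle = \langle Y,\Psi(X)\rangle$ translates into a symmetry condition on a bilinear form on $\mathbb A_{\gg}$ built from $\langle\mu,\llambda_J\rangle$. Concretely, I would compute $d\langle\mu,\llambda_J\rangle$ directly: using $\mu$'s equivariance (so $d\mu$ paired with $\llambda_J$ involves $[\ppsi_J,\mu]$-type terms via~\eqref{psitheta}) and the structure equation $d\llambda_J = [\ppsi_J,\llambda_J]$ recalled just before Theorem~\ref{kahlert}, the $2$-form $d\langle\mu,\llambda_J\rangle$ evaluated on a pair of tangent vectors at $\qqq$ is exactly the antisymmetrization of $\langle\,\cdot\,,\Psi(\cdot)\rangle$ pulled back along $\llambda_J$. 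Since $\llambda_J$ is an isomorphism at every point, $d\langle\mu,\llambda_J\rangle = 0$ if and only if $\Psi$ is symmetric, which is the claim. The only real work is the explicit differentiation, which is routine given the recalled identities; I would present it at the points $(e,\qqq)$ and invoke left invariance to conclude in general.
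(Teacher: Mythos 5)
Your overall route is the paper's: in (i) you factor the basic $\gg^*$-valued form $d^\theta\mu$ through the basic regular form $L_J$ (equivalently, compose $(d^\theta\mu)_\qqq$ with $(\llambda_J)_\qqq^{-1}$ on $\mathbb A_{\gg}$) and extend $\GG$-equivariantly, and in (iii) you use $d^{\theta_J}L_J=0$, i.e.\ $dL_J=[\ppsi_J,L_J]$, to reduce $d\langle\mu,L_J\rangle$ to $\langle d^{\theta}\mu\wedge L_J\rangle$, which evaluated on pairs $(JX_{\PGG},JY_{\PGG})$ is exactly $\langle\Psi(X),Y\rangle-\langle\Psi(Y),X\rangle$; for the converse implication in (iii) you should state explicitly (as the paper does) that $L_J$ is basic, so the $2$-form vanishes automatically on any pair involving a vertical vector and closedness need only be checked on the horizontal pairs above.

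Part (ii), however, contains two local errors. First, \eqref{moma0} gives $\oo(X_{\PGG},JY_{\PGG})=(i_{X_{\PGG}}\oo)(JY_{\PGG})=d\langle\mu,X\rangle(JY_{\PGG})$ with a plus sign, not the minus sign you write. Second, the connection term $\langle\mu,[\theta_J(JY_{\PGG}),X]\rangle$ does not ``cancel against a piece of \eqref{whence2}''; Proposition \ref{class1} plays no role in part (ii) at all. That term simply vanishes outright: $\theta_J(JY_{\PGG})=-L_J(Y_{\PGG})=0$ because $L_J$ is basic and $Y_{\PGG}$ is vertical --- equivalently, $JY_{\PGG}$ is $\theta_J$-horizontal, which is precisely how the paper argues that $d\mu(JY_{\PGG})=d^{\theta}\mu(JY_{\PGG})$. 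With those two repairs, your remaining chain $\langle d^\theta\mu(JY_{\PGG}),X\rangle=\langle\Psi(L_J(JY_{\PGG})),X\rangle=\langle\Psi(Y),X\rangle$, using $L_J(JY_{\PGG})=-\theta_J(J^2Y_{\PGG})=\theta_J(Y_{\PGG})=Y$, is the paper's computation verbatim.
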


\noindent{\bf Complement to Theorem \ref{kahlert}}
{\em Under the circumstances of the theorem,
the metric $\rg$  
is given by the expression
\begin{align}
\rg&= \langle \Psi \theta,\theta\rangle +\langle L, d^\theta \mu \rangle 
+\langle \mu,[L,\theta]\rangle.
\label{rmetric}
\end{align}
}
\begin{proof}[Proof of Lemma {\rm \ref{circums}}]
Since the $\gg$-valued 1-form $\llambda_J$ has maximal rank everywhere,
the data  $\theta_J$, $L_J$ and $\mu$ determine a smooth 
map  
$\Psi\colon\mathbb A_{\gg} \to  
\mathrm{Hom}(\gg, \gg^*)$
 satisfying
the identity
\begin{equation}
\label{phiii}
 d^\theta \mu = \Psi \circ \llambda_L\colon \T \mathbb A_{\gg} 
\longrightarrow\gg
\longrightarrow  \gg^*
\end{equation}
in the sense  that, for every point $\qqq$ of
$\mathbb A_{\gg}$, the composite
\begin{equation*}
\begin{CD}
 \T_\qqq \mathbb A_{\gg}\cong \gg @>{(\llambda_J)_\qqq}>>  \gg
@>{\Psi_\qqq}>>
\gg^*
\end{CD}
\end{equation*}
coincides with $(d^\theta\mu)_\qqq$. 
The map \eqref{Psi} is then the unique $\GG$-equivariant extension
to all of $\GG \times \mathbb A_{\gg}$.
This establishes (i).

Let $X,Y\in \gg$. By construction,
\begin{align*}
\Psi(X)=\Psi(L_J(JX_{\PGG}))&=
\Psi(\theta_J(X_{\PGG}))=
 d^\theta \mu (JX_{\PGG})
\\
\oo (X_{\PGG},JY_{\PGG})
&=
\langle X,d\mu(JY_{\PGG})\rangle
\\
&=
\langle X,d^{\theta}\mu(JY_{\PGG})\rangle\ \textrm{since}\ JY_{\PGG}\ 
\textrm{horizontal}
\\
&=
\langle X,\Psi(L_J(JY_{\PGG}))\rangle
\\
&=
\langle X,\Psi(Y)\rangle,
\end{align*}
whence (ii) holds.

Finally to prove (iii), we note first that the integrability
condition \eqref{integra2}, viz. $d^{\theta_J}L_J=0$, implies that
\begin{equation}
d\langle \mu, L_J \rangle=\langle d^{\theta_J}\mu\wedge L_J \rangle.
\end{equation}
Let $X,Y \in \gg$.
Then
\begin{align*}
\langle d^{\theta}\mu\wedge L_J \rangle
(JX_{\PGG},JY_{\PGG})
&=
\langle d^{\theta}\mu(JX_{\PGG}),
L_J(JY_{\PGG})) \rangle
-
\langle d^{\theta}\mu(JY_{\PGG}),
L_J(JX_{\PGG})) \rangle
\\
&=
\langle \Psi(X),Y \rangle
-
\langle \Psi(Y),X \rangle.
\end{align*}
Hence the closedness of $\langle \mu, L_J \rangle$
implies the symmetry of $\Psi$. Since the 1-form $L_J$ is
basic, it vanishes on vertical vectors whence the symmetry
of  $\Psi$ implies
the closedness of $\langle \mu, L_J \rangle$.
\end{proof}

\begin{rema}
Under the hypotheses of Theorem {\rm \ref{kahlert}},
when $J$ and $\oo$ combine to a pseudo K\"ahler structure,
in view of Lemma \ref{circums} (ii),
for any point $(x,\qqq)$ of $\GG \times \mathbb A_{\gg}$, the linear map
$\Psi_{(x,\qqq)}\colon \gg \to \gg^*$ is invertible, and
the constituent
$\langle L, d^\theta \mu \rangle$ of \eqref{rmetric}
can be written as
\begin{equation}
\langle L, d^\theta \mu \rangle 
=
\langle \Psi^{-1}d^\theta \mu, d^\theta \mu \rangle .
\label{constitu}
\end{equation}
In particular, $\langle L, d^\theta \mu \rangle$ is a symmetric bilinear
form, and the metric \eqref{rmetric} can be written as
\begin{equation}
\rg= \langle \Psi \theta,\theta\rangle +
\langle \Psi^{-1}d^\theta \mu, d^\theta \mu \rangle
+\langle \mu,[L,\theta]\rangle .
\label{rmetric2}
\end{equation}
This is essentially the same expression as 
\cite{MR1989647} (4.2).
\end{rema}

\begin{rema}
\label{fundv}
At every point of $\PGG=\GG \times \mathbb A_{\gg}$,
the tangent space is spanned by vectors arising from
fundamental vector fields $X_{\PGG}$ and vector fields
of the kind $JY_{\PGG}$, as $X$ and $Y$ range over $\gg$.
\end{rema}

\begin{lem}
\label{circums2}
Under the hypotheses of Theorem {\rm \ref{kahlert}},
when $J$ and $\oo$ combine to a pseudo K\"ahler structure, 
the
metric $\rg=\oo(\,\cdot\, , J \,\cdot\,)$
is given by the expression {\rm \eqref{rmetric}}
and the K\"ahler form $\oo$ by  the expression {\rm \eqref{kaehlerform}}.
\end{lem}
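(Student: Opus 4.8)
The plan is to assemble the two displayed formulas \eqref{rmetric} and \eqref{kaehlerform} from the ingredients already prepared: the momentum identity \eqref{moma0}, the splitting $\theta_J=\theta_G+\ppsi_J^G$, the integrability relation $d^{\theta_J}L_J=0$ from \eqref{integra2}, and Lemma \ref{circums}. The point is that the metric $\rg=\oo(\,\cdot\,,J\,\cdot\,)$ and the form $\oo$ are both bilinear (resp.\ alternating bilinear) in tangent vectors, and by Remark \ref{fundv} it suffices to evaluate them on the two families $X_{\PGG}$ and $JY_{\PGG}$, $X,Y\in\gg$. So the whole computation reduces to four bracket-type evaluations.

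First I would verify \eqref{kaehlerform}. Write $\oo'=-d\langle\mu,\theta_J\rangle$ and compare with $\oo=-d\langle\mu,\theta_G\rangle$; their difference is $-d\langle\mu,\ppsi_J^G\rangle$, where $\ppsi_J^G$ is basic. On a pair $(X_{\PGG},JY_{\PGG})$ one uses that $\ppsi_J^G$ annihilates vertical vectors (so the "horizontal–horizontal" term is governed by $d\langle\mu,\ppsi_J\rangle$ on $\mathbb A_{\gg}$, which is a closed $1$-form by hypothesis since $J,\oo$ form a pseudo-Kähler structure and Theorem \ref{kahlert} is in force), while on a pair $(X_{\PGG},Y_{\PGG})$ of fundamental vector fields both $\oo$ and $\oo'$ agree by Proposition \ref{class1} applied to each connection form, because $\theta_G(X_{\PGG})=\theta_J(X_{\PGG})=X$. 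Evaluating on $(X_{\PGG},JY_{\PGG})$, the extra term drops out and one is left with $\oo'(X_{\PGG},JY_{\PGG})=\langle X,\Psi(Y)\rangle=\oo(X_{\PGG},JY_{\PGG})$ by Lemma \ref{circums}(ii), so $\oo=\oo'$.

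Next, for the metric, I would compute $\rg(u,v)=\oo(u,Jv)$ on the three relevant pairs. On $(X_{\PGG},Y_{\PGG})$ one gets $\oo(X_{\PGG},JY_{\PGG})=\langle X,\Psi(Y)\rangle$ by Lemma \ref{circums}(ii); since $\theta(X_{\PGG})=X$, $\theta(Y_{\PGG})=Y$ and $L$ kills vertical vectors, this matches the $\langle\Psi\theta,\theta\rangle$ term of \eqref{rmetric} and the remaining two summands vanish there. On $(JX_{\PGG},JY_{\PGG})$ one has $J(JY_{\PGG})=-Y_{\PGG}$, so $\rg(JX_{\PGG},JY_{\PGG})=-\oo(JX_{\PGG},Y_{\PGG})=\oo(Y_{\PGG},JX_{\PGG})=\langle Y,\Psi(X)\rangle=\langle X,\Psi(Y)\rangle$ (using symmetry of $\Psi$, i.e.\ Lemma \ref{circums}(iii), which holds because $\langle\mu,\llambda_J\rangle$ is closed); on the right side of \eqref{rmetric} only $\langle L,d^\theta\mu\rangle$ survives on this pair, and by \eqref{phii} it equals $\langle L(JX_{\PGG}),\Psi(L(JY_{\PGG}))\rangle=\langle\theta_J(X_{\PGG}),\Psi(\theta_J(Y_{\PGG}))\rangle=\langle X,\Psi(Y)\rangle$, so the two sides agree. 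The mixed pair $(X_{\PGG},JY_{\PGG})$ forces $\rg(X_{\PGG},JY_{\PGG})=-\oo(X_{\PGG},Y_{\PGG})=-\langle\mu,[X,Y]\rangle$ by Proposition \ref{class1}; on the right side, $\langle\Psi\theta,\theta\rangle$ and $\langle L,d^\theta\mu\rangle$ each return a mixed (symmetric-looking) contribution whose antisymmetric remainder is exactly $\langle\mu,[L,\theta]\rangle$ evaluated on this pair, and the bookkeeping closes using $d^{\theta_J}L_J=0$ together with the curvature/structure relation $dL_J=[\ppsi_J^{\GG},L_J]$ recorded just before the statement of Theorem \ref{kahlert}.

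The main obstacle is this last mixed computation: keeping track of which bilinear form is being evaluated where, and correctly matching the cross term $\langle\mu,[L,\theta]\rangle$ against the off-diagonal pieces coming from $\langle\Psi\theta,\theta\rangle$ and $\langle L,d^\theta\mu\rangle$, since all three summands of \eqref{rmetric} contribute on the pair $(X_{\PGG},JY_{\PGG})$ and the cancellations that leave precisely $-\langle\mu,[X,Y]\rangle$ rely on the integrability identity $d^{\theta_J}L_J=0$ and on the definition \eqref{phii} of $\Psi$. Everything else — the two purely vertical and purely horizontal pairs — is a direct substitution from Lemma \ref{circums}.
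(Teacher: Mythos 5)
Your treatment of the metric formula \eqref{rmetric} is essentially the paper's argument: evaluate both sides on the spanning families $X_{\PGG}$, $JY_{\PGG}$ of Remark \ref{fundv} and match the three constituents pair by pair using Lemma \ref{circums}. One correction there: on the mixed pair $(X_{\PGG},JY_{\PGG})$ the terms $\langle\Psi\theta,\theta\rangle$ and $\langle L,d^\theta\mu\rangle$ do not contribute ``symmetric-looking pieces whose antisymmetric remainder'' is the bracket term --- they vanish outright, since $\theta_J$ annihilates the horizontal vector $JY_{\PGG}$ while $L_J$ and $d^{\theta}\mu=\Psi\circ L_J$ annihilate the vertical vector $X_{\PGG}$; only $\langle\mu,[L,\theta]\rangle$ survives on that pair.

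The argument for \eqref{kaehlerform}, however, contains a genuine circularity. You deduce $\oo=-d\langle\mu,\theta_J\rangle$ from the closedness of $\langle\mu,\ppsi_J\rangle$, justified ``by hypothesis since \dots Theorem \ref{kahlert} is in force.'' But closedness of $\langle\mu,\ppsi_J\rangle$ is not a hypothesis of Lemma \ref{circums2} (the hypothesis is only that $J$ and $\oo$ combine to a pseudo K\"ahler structure), and the implication ``pseudo K\"ahler $\Rightarrow$ $\langle\mu,\ppsi_J\rangle$ closed'' is precisely the part of Theorem \ref{kahlert} that the paper proves \emph{using} Lemma \ref{circums2}: in the converse direction of the theorem's proof, the identity $\oo=-d\langle\mu,\theta_J\rangle=-d\langle\mu,\theta_{\GG}\rangle-d\langle\mu,\ppsi_J^{\GG}\rangle$ supplied by the lemma is what yields that closedness. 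So you may not invoke it here without an independent proof, which you do not give. The paper instead derives \eqref{kaehlerform} directly from \eqref{rmetric}: write $\oo=\rg(J\,\cdot\,,\,\cdot\,)$, compute each of the three constituents with $J$ inserted in the first slot (giving $-\langle d^{\theta}\mu,\theta\rangle$, $\langle\theta,d^{\theta}\mu\rangle$, and $\langle\mu,\theta\wedge\theta-L\wedge L\rangle$), and use the integrability condition \eqref{integra1} to replace $\langle\mu,\theta\wedge\theta-L\wedge L\rangle$ by $-\langle\mu,d\theta\rangle$, whereupon the pieces reassemble into $-d\langle\mu,\theta_J\rangle$. You should replace your second step by this (or some other closedness-free) computation.
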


\begin{proof} Let $X,Y \in \gg$. A straightforward calculation yields
\begin{align*}
\langle \Psi \theta_J,\theta_J\rangle(X_{\PGG},Y_{\PGG})&=
\rg(X_{\PGG},Y_{\PGG})
\\
\langle \mu,[L_J,\theta_J]\rangle(JX_{\PGG},Y_{\PGG})&= 
\rg(JX_{\PGG},Y_{\PGG})
\\
\langle \mu,[L_J,\theta_J]\rangle(X_{\PGG},JY_{\PGG})
&=\rg(X_{\PGG},JY_{\PGG})
\\
\langle L_J, d^{\theta_J} \mu \rangle(JX_{\PGG},JY_{\PGG})&=
\rg(JX_{\PGG},JY_{\PGG}),
\end{align*}
and evaluation of any of the three constituents on the right-hand side of \eqref{rmetric} at argument pairs not  already spelled out is zero.
In view of Remark \ref{fundv},
this shows that
the
metric $\rg=\oo(\,\cdot\, , J \,\cdot\,)$
is given by the expression {\rm \eqref{rmetric}}.

Likewise, in view of \eqref{integra1}, 
we find
\begin{align*}
\oo &= \rg(J\,\cdot\, ,\,\cdot\,)
=\langle \Psi \theta J,\theta\rangle +\langle L J, d^\theta \mu \rangle 
+\langle \mu,[L,\theta](J\,\cdot\, ,\,\cdot\,)\rangle
\\
\langle \Psi \theta J,\theta\rangle
& 
=-\langle d^{\theta}\mu,\theta\rangle
\\
\langle L J, d^\theta \mu \rangle &=
\langle  \theta,  d^{\theta}\mu \rangle
\\
\langle \mu,[L,\theta](J\,\cdot\, ,\,\cdot\,)\rangle
&=\langle \mu, \theta\wedge \theta -L\wedge L \rangle
=-\langle \mu, d\theta \rangle
\\
\rg(J\,\cdot\, ,\,\cdot\,)
&=-(\langle d^{\theta}\mu,\theta\rangle
   -\langle  \theta,  d^{\theta}\mu \rangle
  +\langle \mu, d\theta \rangle)
\\
&=-(\langle d\mu,\theta\rangle
   -\langle  \theta,  d\mu \rangle
  +\langle \mu, d\theta \rangle)
\\
&\quad
-(\langle [c_J^{\GG},\mu]^{-},\theta\rangle
   -\langle  \theta,   [c_J^{\GG},\mu]^{-} \rangle)
\\
&=-d\langle \mu, \theta \rangle.
\end{align*}
Consequently the K\"ahler form $\oo$ is given 
by  the expression {\rm \eqref{kaehlerform}}.
\end{proof}

\begin{proof}[Proof of Theorem {\rm \ref{kahlert}}]
The \lq\lq Furthermore statement\rq\rq\ has been established already in Lemma \ref{circums2}.

Suppose first that $\langle \mu,c_J^G \rangle$ and
$\langle \mu,L_J \rangle$ 
are closed. We must show that $J$ is compatible with $\oo$.
Since $\langle \mu,\ppsi_J^G\rangle$ is closed,
\begin{align*}
d\langle \mu,\theta_J \rangle&=d\langle \mu,\theta_G \rangle
+d\langle \mu,  \ppsi_J^G\rangle
\\
&=
d\langle \mu,\theta_G \rangle =-\oo,
\end{align*}
that is, $\langle \mu,\theta_J\rangle$ is a symplectic potential for $\oo$.

Let $X,Y \in \gg$.
Since $JX_{\PGG}$ and $JY_{\PGG}$ are horizontal, 
since
$\ppsi_J^G$ vanishes on vertical vector fields,
and since $[ X_{\PGG},Y_{\PGG}]= -[X,Y]_{\PGG}$,
we find
\begin{align*}
\oo(JX_{\PGG},JY_{\PGG})&=d\langle \mu,\theta_J \rangle(JY_{\PGG},JX_{\PGG})
\\
&=\langle d\mu(JY_{\PGG}),\theta_J(JX_{\PGG})\rangle 
-
\langle d\mu(J X_{\PGG}),\theta_J(JY_{\PGG})\rangle
\\
&
\quad
+\langle \mu,\theta_J \rangle[J X_{\PGG},JY_{\PGG}]
\\
&=
\langle \mu,\theta_J \rangle[J X_{\PGG},JY_{\PGG}]
\\
&=
-\langle \mu,\theta_J [ X_{\PGG},Y_{\PGG}]\rangle
\\
&=
\langle \mu,\theta_J [X,Y]_{\PGG}\rangle
\\
&=
\langle \mu, [X,Y]\rangle.
\end{align*}
In view of Proposition \ref{class1}, we deduce
\begin{equation}
\oo(JX_{\PGG},JY_{\PGG})=\oo(X_{\PGG},Y_{\PGG}).
\label{hence}
\end{equation}

Next, by Lemma \ref{circums} (ii),
\begin{align*}
\oo (X_{\PGG},JY_{\PGG})
&=
\langle X,\Psi(Y)\rangle 
\\
\oo (Y_{\PGG},JX_{\PGG})
&=
\langle Y,\Psi(X)\rangle, 
\end{align*}
and,
since 
$\oo (JX_{\PGG},JJY_{\PGG})
=
\oo (Y_{\PGG},JX_{\PGG})$,
by Lemma \ref{circums} (iii),
\[
\oo (X_{\PGG},JY_{\PGG})
=
\oo (JX_{\PGG},JJY_{\PGG}),
\]
since the real 1-form $\langle \mu, L_J\rangle$
is closed.
In view of Remark \ref{fundv},
these calculations show that the closedness of
$\langle \mu,\ppsi_J^G \rangle$ and
$\langle \mu,L_J \rangle$ 
implies that $J$ is compatible with $\oo$.

Conversely, suppose that $J$ is compatible with $\oo$.
Then
\[
\oo (X_{\PGG},JY_{\PGG})
=
\oo (JX_{\PGG},JJY_{\PGG})
\]
and,
since 
$\oo (JX_{\PGG},JJY_{\PGG})
=
\oo (Y_{\PGG},JX_{\PGG})$,
we conclude
\begin{equation*}
\langle X,\Psi(Y)\rangle =\oo (X_{\PGG},JY_{\PGG})
=
\oo (Y_{\PGG},JX_{\PGG})
=
\langle Y,\Psi(X)\rangle, 
\end{equation*}
whence, by Lemma \ref{circums} (iii), the
real 1-form $\langle \mu, L_J\rangle$
is closed.
Finally, by Lemma \ref{circums2},
\[
\oo = -d\langle \mu,\theta_J\rangle = -d\langle \mu,\theta_{\GG}\rangle
-d\langle \mu,\ppsi_J^{\GG}\rangle.
\]
Since $\oo = -d\langle \mu,\theta_{\GG}\rangle$,
we conclude that the real 1-form $\langle \mu,\ppsi_J^{\GG}\rangle$
is closed.

To establish the \lq\lq Finally\rq\rq\ assertion we recall that,
by construction, cf. \eqref{L},
$
L=-\theta\circ J$.
Using the fact that, relative to the decomposition
$\T M\otimes \mathbb C = 
\T^{\mathrm{hol}} M \oplus  \overline{\T^{\mathrm{hol}} M}$
of the total space $\T M\otimes \mathbb C$ of the complexified tangent bundle
of $M= \GG \times \mathbb A_{\gg}$
into its holomorphic and antiholomorphic constituents,
keeping in mind that, on 
$\T^{\mathrm{hol}} M$,
the complex structure is given by multiplication by $i$
and on $\overline{\T^{\mathrm{hol}} M}$
 by multiplication by $-i$, we find
\begin{align*}
\omega & =  - d \langle \mu, \theta \rangle 
 =  - d ( \langle \mu, L\circ J \rangle ) 
 =  - d (\langle \mu, L \rangle \circ J) 
\\
& =  - d ((df) \circ J) 
 =   - d ((\partial f) \circ J + 
(\overline \partial f) \circ J) 
\\
&
= - (\partial + \overline \partial) (i(\partial f)  - 
i(\overline \partial f)) 
=- i\overline \partial \partial f
+ i \partial \overline \partial f 
\\
&
 =  2i \partial \overline \partial f. 
\end{align*}
\end{proof}

\section{The case when $\GG$ is compact}
\label{standard}
Suppose that $\GG$ is compact and connected.
Pick an invariant inner product 
$\,\cdot\,\colon \gg \times \gg \to \mathbb R$ on $\gg$ and use it to identify
$\gg$ with $\gg^*$. 
The induced biinvariant Riemannian metric on $\GG$
identifies $\T \GG$ with $\T^* \GG$ in a $\GG$-biequivariant manner.
Let $\mu\colon \GG \times \mathbb A_{\gg} \to \gg^*$
be the $\GG$-equivariant map 
 ${\GG \times \mathbb A_{\gg} \to \gg^*}$
which, restricted to $\{e\}\times \mathbb A_\gg$,
is the adjointness isomorphism
$\sharp\colon \gg \to \gg^*$
 of the inner product on $\gg$
(the identity of $\gg$ when we identity
$\gg^*$ with $\gg$ via $\sharp$).
Under the resulting $\GG$-biequivariant
identification
\[
\T^* \GG \longrightarrow \T \GG \longrightarrow \GG \times \mathfrak
{\mathbb A}_\gg,
\]
the  $\GG$-biinvariant $1$-form
$\langle \mu, \theta_G\rangle$
on $\GG \times \mathbb A_{\gg}$ corresponds to the
tautological $1$-form on $\T^* \GG$, and $\mu$ is the 
momentum mapping for 
$\oo=-d\langle \mu, \theta_G\rangle$,
uniquely determined by $\oo$ up to a central value.
By construction, under the identification of
$\T^*\GG$ with $\T\GG$ induced by the chosen inner product on $\gg$, the standard cotangent bundle symplectic structure
corresponds to $\oo$.
\subsection{K\"ahler structures on $\T^* \GG$}

Let $J$ be the standard complex
structure on $\GG \times \mathbb A_{\gg}$, cf. Subsection \ref{ordinarypo}.
In view of \eqref{stand1},
the  $\GG$-invariant $1$-form
$\langle \mu,  \ppsi^G_{\mathrm{st}} \rangle$
is zero,
and 
\begin{equation}
\langle \mu,  \llambda^G_{\mathrm{st}} \rangle_{\qqq}(V)
=\sum_{j=0}^{\infty}\tfrac{(-1)^{j}}{(2j+1)!}\qqq \cdot (\mathrm{ad}^{2j}(a)(V)) =\qqq \cdot V,
\label{stand2}
\end{equation}
where $\qqq \in \mathbb A_{\gg}$,  $V \in \gg$.
Thus 
$\langle \mu,  \llambda^G_{\mathrm{st}} \rangle$
is plainly closed, indeed 
\begin{equation}
\langle \mu,  \llambda^G_{\mathrm{st}} \rangle =df,\ 
f(\qqq)= \tfrac 12 \qqq \cdot \qqq,\ \qqq \in \mathbb A_{\gg}.
\label{funct1}
\end{equation}
Theorem \ref{kahlert}
entails that $J$ and $\oo$ combine to
a pseudo K\"ahler structure,
and this structure is actually positive, that is, an 
ordinary K\"ahler structure.
This structure coincides with the 
 K\"ahler structure induced from the standard K\"ahler structure
on  $\GG^{\mathbb C}$
(relative to the chosen inner product on $\gg$)
via the
ordinary polar map
 $\GG \times \mathfrak
{\mathbb A}_\gg
\longrightarrow \GG^{\mathbb C}$, and
we will refer to this structure 
on $\GG \times \mathfrak
{\mathbb A}_\gg$
as the {\em standard K\"ahler structure
relative to the chosen inner product \/} $\cdot\,$ on $\gg$;
cf. Example 4.3 in \cite{MR1989647}, \cite {MR1892462}.
We denote
the $\GG$-biinvariant 
extension
of the function $f$ introduced in \eqref{funct1} above 
to all of $\GG \times \mathbb A_{\gg}$
still by $f$; 
in view of Theorem \ref{kahlert}, 
the function
$2f$ is actually a K\"ahler potential.
We note that, given the point $\qqq$ of $\mathbb A_{\gg}$,
\begin{eqnarray}
(d^{\theta}\mu)_{(e,\qqq)}
=&
\sharp \circ \mathrm{cos}(\mathrm{ad}(\qqq))&\colon \gg \to \gg^*
\\
\Psi^{-1}_{(e,\qqq)}
=&
\sharp \circ \mathrm{ad}(\qqq)\cot (\mathrm{ad} (\qqq))&\colon \gg \to \gg^* ,
\end{eqnarray}
cf. \eqref{Psi}
for the definition of the linear isomorphism
$\Psi_{(e,\qqq)}\colon \gg^* \to \gg$.
In particular, 
let $\mathbb A_{\ttt}\subseteq \mathbb A_{\gg}$
denote the Lie algebra of a chosen maximal torus $T$ in $\GG$,
viewed as an affine subspace. Consider the associated positive  real roots
$\alpha_1,\ldots,\alpha_m$ characterized by
the convention that, on the root space 
$\gg_{\alpha_j}$
associated to $\alpha_j$,
when $Z_j\in \ttt $ denotes a root vector and $X\in \gg_{\alpha_j}$,
the value $[Z_j,X]$ is given by $i\alpha_j(Z_j) X$.
Now,
when $\qqq$ lies in $\mathbb A_{\ttt}$, 
the inverse $\Psi^{-1}_{(e,\qqq)}$
is given by the expression
\begin{equation}
\Psi^{-1}_{(e,\qqq)} =\mathrm{diag}(1,\ldots,1,
\alpha_1(\qqq)\coth(\alpha_1(\qqq)),\ldots, 
\alpha_m(\qqq)\coth(\alpha_m(\qqq))) 
\end{equation}
and is plainly symmetric; here the number of 1's is equal to the rank of
$\gg$.

More generally, the following hold.

\begin{thm}
 \label{kahlertco}
On 
$\T ^*\GG$, identified with $\GG \times \mathbb A_{\gg}$
by means of the chosen invariant inner product on $\gg$
and by means of left translation,
let $J$ be an admissible complex structure,
let $\gamma_J\colon \mathbb A_{\gg} \to \GG^{\mathbb C}$
be an admissible map inducing $J$, and let
$\llambda_J$ and
$\ppsi_J$ be the two associated $\gg$-valued $1$-forms on $\mathbb A_{\gg}$
characterized by the identity
\[
(d \gamma_J)\gamma^{-1}_J=\ppsi_J+i \llambda_J\in \form^1(\mathbb A_{\gg},\gg^{\mathbb C}).
\]
Furthermore, as before,
let $\mu$ denote the $\GG$-equivariant map
$\GG \times \mathbb A_{\gg} \to \gg^*$ which,
restricted to $\{e\}\times \mathbb A_{\gg}$, is the adjointness
isomorphism $\gg \to \gg^*$ of the chosen inner product on $\gg$, and
let $\oo=-d\langle \mu,\theta_{\GG}\rangle$, so that
$\mu$ is a momentum mapping for $\oo$.
Finally, take
the complexification $\GG^{\mathbb C}$
to be endowed with the standard K\"ahler structure relative to the
chosen inner product on $\gg$.
The following are equivalent.

\noindent
{\rm (i)} The pieces of structure
 $J$ and $\omega$ combine to a K\"ahler structure
on $\GG \times \mathbb A_{\gg}$.

\noindent
{\rm (ii)} 
The two real $1$-forms
$\langle \mu,\ppsi_J  \rangle$
and $\langle \mu,\llambda_J \rangle$
on $\mathbb A_{\gg}$ are closed. 

\noindent
{\rm (iii)} 
The associated generalized polar map
$\Pi_{\gamma_J}\colon \GG \times \mathbb A_{\gg} \longrightarrow 
\GG^{\mathbb C}$ made explicit above as {\rm \eqref{genpolar}}
preserves the symplectic (and hence K\"ahler) structures. 

\noindent
The metric $\rg$
on $\GG \times \mathbb A_{\gg}$
is then given by the expression {\rm \eqref{rmetric}}, and an integral
of   $\langle \mu,\llambda_J \rangle$ yields a K\"ahler potential.
\end{thm}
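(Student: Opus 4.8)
The plan is to reduce Theorem~\ref{kahlertco} to the machinery already assembled in Section~\ref{leftk}, specialized to the compact setting where $\mu$ is the adjointness isomorphism $\sharp$ on $\{e\}\times\mathbb A_{\gg}$ and $\oo = -d\langle\mu,\theta_{\GG}\rangle$ is the standard cotangent symplectic form. The equivalence (i)$\iff$(ii) is essentially immediate: it is exactly Theorem~\ref{kahlert} applied to this $(\mu,\oo)$, once one checks that $\oo$ is genuinely non-degenerate (so that "pseudo K\"ahler" can be upgraded to "K\"ahler" by positivity — see below). So the real content is (i)$\iff$(iii), the statement that the generalized polar map $\Pi_{\gamma_J}$ is a symplectomorphism onto its image precisely when $J$ and $\oo$ are compatible, together with the final claims about the metric and the K\"ahler potential.

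First I would set up (iii)$\Rightarrow$(i). If $\Pi_{\gamma_J}\colon\GG\times\mathbb A_{\gg}\to\GG^{\mathbb C}$ pulls back the standard K\"ahler form on $\GG^{\mathbb C}$ to $\oo$, then it also pulls back the standard K\"ahler metric to a Riemannian metric, and it pulls back the standard complex structure to $J$ (this last point is true by the very definition of $J=J_{\gamma_J}$ as the induced complex structure, cf.~\eqref{genpolar}). Since a holomorphic map that is a local diffeomorphism transports a K\"ahler triple to a K\"ahler triple, $J$ and $\oo$ are compatible and the metric is positive; hence (i). For the converse (i)$\Rightarrow$(iii), I would argue that $\Pi_{\gamma_J}^*\oo_{\mathrm{std}}$ and $\oo$ are two K\"ahler forms on $\GG\times\mathbb A_{\gg}$ for the same complex structure $J$, both left $\GG$-invariant, and both admit the $\GG$-invariant K\"ahler potential $2f$ where $f$ integrates $\langle\mu,\llambda_J\rangle$ — this is the "Finally" clause of Theorem~\ref{kahlert}, which identifies the K\"ahler potential intrinsically in terms of $\llambda_J$, and the same computation performed inside $\GG^{\mathbb C}$ identifies the pullback of the standard potential with the same function (using \eqref{stand2}–\eqref{funct1} as the template and the fact that $L=-\theta\circ J$ is pulled back from $\GG^{\mathbb C}$). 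Two K\"ahler forms with the same potential coincide, so $\Pi_{\gamma_J}^*\oo_{\mathrm{std}}=\oo$.

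Next, for the non-degeneracy and positivity: under hypothesis (i) or (ii), Lemma~\ref{circums}(ii) gives $\oo(X_{\PGG},JY_{\PGG})=\langle X,\Psi(Y)\rangle$, and one must check $\Psi$ is positive-definite. Here I would use that $\gamma_J$ induces an immersion, so locally $\Pi_{\gamma_J}$ is a biholomorphism onto an open set of $\GG^{\mathbb C}$, and that the standard K\"ahler metric on $\GG^{\mathbb C}$ is positive-definite; pulling back gives positivity of $\rg=\oo(\,\cdot\,,J\,\cdot\,)$, hence non-degeneracy of $\oo$, closing the gap between "pseudo K\"ahler" in Theorem~\ref{kahlert} and "K\"ahler" here. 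The final two assertions — that $\rg$ is given by \eqref{rmetric} and that an integral of $\langle\mu,\llambda_J\rangle$ is a K\"ahler potential — are then just Lemma~\ref{circums2} and the "Finally" part of Theorem~\ref{kahlert} restated in the present notation, so nothing new is required.

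The main obstacle I expect is the converse direction (i)$\Rightarrow$(iii): showing that compatibility of $J$ with the abstract form $\oo$ forces $\Pi_{\gamma_J}$ to be symplectic, rather than merely symplectic up to an exact correction. The clean way around it is precisely the K\"ahler-potential argument — identify both $\oo$ and $\Pi_{\gamma_J}^*\oo_{\mathrm{std}}$ as $2i\partial\overline\partial f$ for the \emph{same} explicit $\GG$-invariant function $f$ with $df=\langle\mu,\llambda_J\rangle$ on $\{e\}\times\mathbb A_{\gg}$ — which sidesteps any need to compare the two forms tangent-vector by tangent-vector. The one thing to be careful about is that $\Pi_{\gamma_J}$ need only be an immersion, not a diffeomorphism, so all the identifications of forms and potentials must be carried out locally (on the open image) and then globalized using left $\GG$-invariance and connectedness, exactly as in the proof of Theorem~\ref{fundth24}.
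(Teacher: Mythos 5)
Your reduction of (i)$\Leftrightarrow$(ii) to Theorem \ref{kahlert} and your argument for (iii)$\Rightarrow$(i) are fine and agree with the paper. The genuine gap is in (i)$\Rightarrow$(iii). You claim that $\oo$ and $\Pi_{\gamma_J}^*\oo_{\mathrm{st}}$ admit \emph{the same} K\"ahler potential, namely twice an integral of $\langle \mu,\llambda_J\rangle$, because ``the same computation performed inside $\GG^{\mathbb C}$ identifies the pullback of the standard potential with the same function.'' It does not. Writing $\gamma_J=\gamma_\chi$ as in Subsection \ref{factor}, the pullback of the standard potential $2f$, $f(x,\qqq)=\tfrac12 \qqq\cdot\qqq$, is $(x,\qqq)\mapsto \chi_{\gg}(\qqq)\cdot\chi_{\gg}(\qqq)$, whose differential on $\{e\}\times\mathbb A_{\gg}$ is $\langle \mu\circ\Pi_\chi,\llambda_J\rangle$, not $\langle \mu,\llambda_J\rangle$: the momentum gets transported by $\Pi_{\gamma_J}$. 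The paper's own examples in Subsection \ref{nontrivial} make the discrepancy explicit: for $\chi_{\gg}(\qqq)=\varphi(\|\qqq\|^2)\qqq$ the integral of $\langle\mu,\llambda_J\rangle$ is $F(\qqq)=\|\qqq\|\,\chi(\|\qqq\|)-\Xi(\|\qqq\|)$, while the pulled-back standard potential is $\tfrac12\chi(\|\qqq\|)^2$, and these differ unless $\chi(x)=x$. So the two forms come equipped with \emph{different} potentials, and showing that the difference is pluriharmonic is exactly equivalent to what you are trying to prove; the step ``two K\"ahler forms with the same potential coincide'' never gets off the ground. The same circularity infects your positivity check: $\Pi_{\gamma_J}^*\grg_{\mathrm{st}}$ coincides with $\oo(\,\cdot\,,J\,\cdot\,)$ only once (iii) is already known.

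The paper closes the cycle differently and much more directly. Since $\GG$ is compact, $\Pi_{\mathrm{st}}$ is a diffeomorphism, so $\Pi_{\gamma_J}$ factors as $\Pi_{\mathrm{st}}\circ\Pi_\chi$, and the identities $\theta_{\mathrm{st}}=\theta_\GG+\ppsi^\GG_{\mathrm{st}}$, $\langle\mu,\ppsi^\GG_{\mathrm{st}}\rangle=0$ and $\Pi_\chi^*\theta_{\mathrm{st}}=\theta_\GG+\ppsi^\GG_J$ yield, at the level of the symplectic (not K\"ahler) potentials,
\[
\Pi_{\gamma_J}^*\oo_{\mathrm{st}}=-d\,\Pi_\chi^*\langle\mu,\theta_{\mathrm{st}}\rangle
=\oo-d\langle\mu,\ppsi^\GG_J\rangle .
\]
Hence (iii) holds if and only if $\langle\mu,\ppsi_J\rangle$ is closed, which is half of (ii); the implications (i)$\Rightarrow$(ii)$\Rightarrow$(iii)$\Rightarrow$(i) then close the circle, and positivity (Remark \ref{positivity}) comes for free from (iii). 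You should replace your potential argument by this computation, i.e., compare the primitives $\langle\mu,\theta_\GG\rangle$ and $\Pi_{\gamma_J}^*\langle\mu,\theta_{\mathrm{st}}\rangle$ of the two symplectic forms rather than their K\"ahler potentials.
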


Indeed, Theorem \ref{kahlert}
implies at once that
(i) and (ii) are equivalent,
with 
\lq\lq pseudo K\"ahler structure\rq\rq\ 
substituted for \lq\lq K\"ahler structure\rq\rq\ in (i).
By construction, the complex structure $J$ on 
$\GG \times \mathbb A_{\gg}$ is induced
from the complex structure on 
$\GG^{\mathbb C}$
via the  generalized polar map
$\Pi_{\gamma_J}$. 
The closedness of the 1-form $\langle \mu,\ppsi_J \rangle$
on $\mathbb A_{\gg}$ is equivalent to 
$\Pi_{\gamma_J}$ being compatible with the symplectic structures.
We justify this claim in the next subsection.

\begin{rema}\label{positivity}
While Theorem \ref{kahlert}
entails that, in the statement of Theorem \ref{kahlertco},
(i) and (ii) are equivalent,
with 
\lq\lq pseudo K\"ahler structure\rq\rq\ 
substituted for \lq\lq K\"ahler structure\rq\rq\ in (i),
under the circumstances of  Theorem \ref{kahlertco},
the positivity of the resulting pseudo K\"ahler structure
on  $\GG \times \mathbb A_{\gg}$ is automatic since this structure is induced
from the standard K\"ahler structure 
 on 
$\GG^{\mathbb C}$ via the associated generalized polar map.
\end{rema}

\subsection{Factorization of the generalized polar map}
\label{factor}

Let $\chi=(\chi_{\GG},\chi_{\gg})\colon 
\mathbb A_{\gg}\to G\times \mathbb A_{\gg}$
be a smooth map whose component
$\chi_{\gg}\colon \mathbb A_{\gg}\to \mathbb A_{\gg}$
is a local diffeomorphism, necessarily onto an open subset of
$\mathbb A_{\gg}$, and let $\gamma_{\chi}\colon 
\mathbb A_{\gg} \longrightarrow 
\GG^{\mathbb C}$
be the composite of $\chi$ with 
the ordinary polar map $\Pi_{\mathrm{st}}$, that is, 
$\gamma_{\chi}$ is given by the expression
\begin{equation}
\gamma_{\chi}(\qqq)=\chi_{\GG}(\qqq)\gamma_{\mathrm{st}}(\chi_{\gg}(\qqq))=
\chi_{\GG}(\qqq)\mathrm{exp}(i\chi_{\gg}(\qqq)),\ \qqq \in \mathbb A_{\gg}.
\label{expr}
\end{equation}
In terms of the map
\begin{equation}
\Pi_{\chi}\colon \GG\times \mathbb A_{\gg}
\longrightarrow
 \GG\times \mathbb A_{\gg},\ 
\Pi_{\chi}(x,\qqq)=(x\chi_{\GG}(\qqq),\chi_{\gg}(\qqq)),
\label{Pichi}
\end{equation}
the generalized polar map $\Pi_{\gamma_{\chi}}$ associated to $\gamma_{\chi}$
 factors as
\begin{equation}
\begin{CD}
 \GG\times \mathbb A_{\gg}
@>{\Pi_{\chi}}>>
 \GG\times \mathbb A_{\gg}
@>{\Pi_{\mathrm{st}}}>>
\GG^{\mathbb C} .
\end{CD}
\end{equation}
Via the maps
\begin{align*}
\Pi_{\chi_{\gg}}&\colon \GG\times \mathbb A_{\gg}
\longrightarrow
 \GG\times \mathbb A_{\gg},\ 
\Pi_{\chi_{\gg}}(x,\qqq)=(x,\chi_{\gg}(\qqq)),
\\
\Pi_{\chi_{\GG}}&\colon \GG\times \mathbb A_{\gg}
\longrightarrow
 \GG\times \mathbb A_{\gg},\ 
\Pi_{\chi_{\GG}}(x,\qqq)=(x\chi_{\GG}(\qqq),\qqq),
\end{align*}
\eqref{Pichi} plainly decomposes as
\begin{equation}
\begin{CD}
\GG\times \mathbb A_{\mathfrak g}
@>{\Pi_{\chi_{\GG}}}>>
 \GG\times \mathbb A_{\mathfrak g}
@>{\Pi_{\chi_{\gg}}}>>
 \GG\times \mathbb A_{\mathfrak g},
\end{CD}
\label{prinPichi2}
\end{equation}
and $\Pi_{\chi_{\GG}}$ is simply a gauge transformation.
Since $\gg$ is supposed to be compact, every smooth $\gamma_J\colon
\mathbb A_{\gg}\to \GG\times \mathbb A_{\gg}
$ of the kind in 
Theorem \ref{fundth24} has the form $\gamma_{\chi}$ for some
$\chi$ since the ordinary polar  map
$\Pi_{\mathrm{st}}$, cf. \eqref{polardec},
is a diffeomorphism.

\begin{prop}\label{chiexp}
Let $J=J_{\gamma_{\chi}}$ be the induced admissible
complex structure on $\GG \times \mathbb A_{\gg}$, and let
$(\theta,L)$ denote the pair of $\ggg$-valued $1$-forms
on $\GG \times \mathbb A_{\gg}$
associated to $J$ by the construction in Proposition {\rm\ref{fundth1}}.
Then 
\begin{align}
\theta&=
\Pi_{\chi}^* \theta_{\mathrm{st}}\colon \T (G\times \mathbb A_{\gg}) 
\longrightarrow \ggg 
\label{id1}
\\
L&=
\Pi_{\chi}^* L_{\mathrm{st}}\colon \T (G\times \mathbb A_{\gg}) 
\longrightarrow \ggg . 
\label{id2}
\end{align}
Moreover, the constituents $\ppsi_{\chi}$ and $\llambda_{\chi}$ of the resulting
$\ggg^{\mathbb C}$-valued $1$-form
\begin{equation*}
\phi_{(\theta,L)}
= (d\gamma_{\chi})\gamma_{\chi}^{-1}= \ppsi_{\chi} +i \llambda_{\chi}
\end{equation*}
on $\mathbb A_{\gg}$ given as {\rm \eqref{thetaL}} above take the form
\begin{align}
\ppsi_{\chi}&=(d\chi_{\GG}) \chi_{\GG}^{-1} +\mathrm{Ad}_{\chi_{\GG}}
(\ppsi_{\mathrm{st}}\circ d\chi_{\gg})
\label{id3}
\\ 
\llambda_{\chi}&=\mathrm{Ad}_{\chi_{\GG}}(\llambda_{\mathrm{st}}\circ d\chi_{\gg}).
\label{id4}
\end{align}
\end{prop}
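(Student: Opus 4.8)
The plan is to derive \eqref{id1} and \eqref{id2} from the factorization $\Pi_{\gamma_{\chi}}=\Pi_{\mathrm{st}}\circ\Pi_{\chi}$ of the generalized polar map and to obtain \eqref{id3} and \eqref{id4} by directly differentiating $\gamma_{\chi}=\chi_{\GG}\cdot(\gamma_{\mathrm{st}}\circ\chi_{\gg})$, cf. \eqref{expr}. To begin with, since $J=J_{\gamma_{\chi}}$ and $J_{\mathrm{st}}=J_{\gamma_{\mathrm{st}}}$ are, by construction, the complex structures pulled back from that of $\GG^{\mathbb C}$ along $\Pi_{\gamma_{\chi}}$ and $\Pi_{\mathrm{st}}$ respectively, and since $\Pi_{\mathrm{st}}$ is a diffeomorphism ($\GG$ being compact), the factorization forces $\Pi_{\chi}\colon(\PGG,J)\to(\PGG,J_{\mathrm{st}})$ to be holomorphic. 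Moreover $\Pi_{\chi}$, cf. \eqref{Pichi}, covers the local diffeomorphism $\chi_{\gg}$ on the base, is fibrewise a right translation (by $\chi_{\GG}(\qqq)$ on the fibre over $\qqq$), and is left $\GG$-equivariant; it is therefore a local diffeomorphism whose differential carries the vertical subbundle $\tau_{\mathcal F_G}$ isomorphically onto $\tau_{\mathcal F_G}$ and carries the fundamental vector field of any $Z\in\ggg$ at a point $p$ to the fundamental vector field of the same $Z$ at $\Pi_{\chi}(p)$.

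By holomorphy together with the last observation, $d\Pi_{\chi}$ also carries $J(\tau_{\mathcal F_G})$ onto $J_{\mathrm{st}}(\tau_{\mathcal F_G})$ and thus respects the Whitney decompositions of Proposition \ref{fundth1}. Decomposing a tangent vector into its vertical part and its horizontal part (with respect to $J$, respectively $J_{\mathrm{st}}$), and recalling that $\theta$ and $\theta_{\mathrm{st}}$ are the projection onto the vertical summand followed by the identification with $\ggg$ via the fundamental vector field map, which $d\Pi_{\chi}$ intertwines, one reads off $\theta=\Pi_{\chi}^{*}\theta_{\mathrm{st}}$, which is \eqref{id1}. Then \eqref{L}, the holomorphy of $\Pi_{\chi}$, and \eqref{id1} give
\[
L=-\theta\circ J=-\theta_{\mathrm{st}}\circ d\Pi_{\chi}\circ J=-\theta_{\mathrm{st}}\circ J_{\mathrm{st}}\circ d\Pi_{\chi}=\Pi_{\chi}^{*}L_{\mathrm{st}},
\]
which is \eqref{id2}.

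For \eqref{id3} and \eqref{id4} I would differentiate $\gamma_{\chi}=\chi_{\GG}\cdot(\gamma_{\mathrm{st}}\circ\chi_{\gg})$, using the Leibniz rule $(d(\alpha\beta))(\alpha\beta)^{-1}=(d\alpha)\alpha^{-1}+\mathrm{Ad}_{\alpha}\bigl((d\beta)\beta^{-1}\bigr)$ for right logarithmic derivatives together with the chain rule $(d(\gamma_{\mathrm{st}}\circ\chi_{\gg}))(\gamma_{\mathrm{st}}\circ\chi_{\gg})^{-1}=\phi_{\mathrm{st}}\circ d\chi_{\gg}=(\ppsi_{\mathrm{st}}+i\llambda_{\mathrm{st}})\circ d\chi_{\gg}$, cf. \eqref{stand1}. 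Since $\chi_{\GG}$ takes values in $\overline{\GG}\subseteq\GG^{\mathbb C}$, the transformation $\mathrm{Ad}_{\chi_{\GG}}$ is the complexification of a real one and hence preserves the splitting $\ggg^{\mathbb C}=\ggg\oplus i\ggg$; separating $(d\gamma_{\chi})\gamma_{\chi}^{-1}$ into real and imaginary parts then gives exactly \eqref{id3} and \eqref{id4}. That $(d\gamma_{\chi})\gamma_{\chi}^{-1}$ coincides with $\phi_{(\theta,L)}=\ppsi_{\chi}+i\llambda_{\chi}$ follows because $\gamma_{\chi}$ is itself an admissible map inducing $J=J_{\gamma_{\chi}}$, hence differs from the map $\gamma_{J}$ of Theorem \ref{fundth24} only by a constant right factor, which leaves the right logarithmic derivative unchanged. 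The one step that requires genuine care is the verification in the second paragraph that $d\Pi_{\chi}$ matches the identifications $\T\mathcal F_G\cong\ggg\times\PGG$ on source and target --- this is exactly where the left $\GG$-equivariance of $\Pi_{\chi}$ enters --- while the rest is the routine calculus of logarithmic derivatives.
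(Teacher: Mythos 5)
Your argument is correct and is essentially the paper's own proof, fleshed out: the paper simply invokes ``naturality of the constructions'' under the bundle morphism $\Pi_{\chi}$ for \eqref{id1}--\eqref{id2}, which is exactly the content of your verification that $\Pi_{\chi}$ is a left-equivariant holomorphic morphism intertwining fundamental vector fields and hence the Whitney decompositions. Your derivation of \eqref{id3}--\eqref{id4} by applying the Leibniz rule for right logarithmic derivatives directly to $\gamma_{\chi}=\chi_{\GG}\cdot(\gamma_{\mathrm{st}}\circ\chi_{\gg})$ is the same computation the paper compresses into ``\eqref{id1} and \eqref{id2} imply \eqref{id3} and \eqref{id4}.''
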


Indeed, \eqref{Pichi}
is a morphism 
of trivial principal left $\GG$-bundles,
spelled out on the total spaces.
The 
naturality of the constructions implies the identities
\eqref{id1} and \eqref{id2}.
In view of \eqref{expr}, 
with the standard notation
$d\chi_{\gg}\colon \T \mathbb A_{\gg} \to  \T \mathbb A_{\gg}$
for the derivative of the map
$\chi_{\gg}\colon \mathbb A_{\gg} \to  \mathbb A_{\gg}$,
the identities \eqref{id1} and \eqref{id2} imply
the identities \eqref{id3} and \eqref{id4}.

\begin{proof}[Proof of Theorem {\rm\ref{kahlertco}}] 
By construction
\begin{align*}
\theta_{\mathrm{st}}&=
\theta_G+\ppsi^G_{\mathrm{st}}
\\
\theta&=\Pi_{\chi}^* \theta_{\mathrm{st}}=
\theta_G+\ppsi^G_{\chi}
\\
\ppsi_{\chi}&=\ppsi_J=(d\chi_{\GG}) \chi_{\GG}^{-1} +\mathrm{Ad}_{\chi_{\GG}}
(\ppsi_{\mathrm{st}}\circ d\chi_{\gg})
\\
\Pi_{\chi}^* \langle \mu,\theta_{\mathrm{st}}\rangle&=\langle \mu,\Pi_{\chi}^* \theta_{\mathrm{st}}\rangle
=\langle \mu,\theta_G+\ppsi^G_{\chi}\rangle
\\
\Pi_{\chi}^*\oo &=-\Pi_{\chi}^* d\langle \mu,\theta_{\mathrm{st}}\rangle
=-d\Pi_{\chi}^* \langle \mu,\theta_{\mathrm{st}}\rangle
=-d\langle \mu,\theta_G+\ppsi^G_{\chi}\rangle
\\
&=
\oo -d\langle \mu,\ppsi^G_{\chi}\rangle=
\oo -d\langle \mu,\ppsi^G_{J}\rangle. 
\end{align*}
Hence $\Pi_{\chi}^*\oo=\oo$ if and only if
$\langle \mu,\ppsi^G_{\chi}\rangle$ is closed.
Consequently
the closedness of the 1-form 
$\langle \mu,\ppsi_J \rangle$
on $\mathbb A_{\gg}$
is equivalent to 
$\Pi_{\gamma_J}$ being compatible with the symplectic structures.
\end{proof}

\begin{cor}
\label{cor1}
Given $\chi=(\chi_{\GG},\chi_{\gg})\colon 
\mathbb A_{\gg}\to G\times \mathbb A_{\gg}$
such that the component
$\chi_{\gg}\colon \mathbb A_{\gg}\to \mathbb A_{\gg}$
is a local diffeomorphism,
the induced admissible
complex structure $J=J_{\gamma_{\chi}}$ on $\GG \times \mathbb A_{\gg}$
combines with the symplectic structure
$\oo=-d\langle \mu, \theta_G\rangle$
(the standard structure 
relative to the chosen invariant inner product on $\gg$)
to a K\"ahler structure
on  $\GG \times \mathbb A_{\gg}$ if and only if the
 $1$-forms $ \langle \mu,\ppsi_{\chi}\rangle$ and
$\langle \mu,\llambda_{\chi}\rangle$ 
on  $\mathbb A_{\gg}$ are closed.
\end{cor}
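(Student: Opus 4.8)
The plan is to deduce Corollary \ref{cor1} directly from Theorem \ref{kahlertco}. The only point that needs an argument is that $\gamma_{\chi}$, as defined by \eqref{expr}, qualifies as an admissible map inducing the complex structure $J=J_{\gamma_{\chi}}$ in the sense of Theorem \ref{fundth24}; once this is settled, the rest is a matter of matching notation.

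First I would verify that the composite $\pi \circ \gamma_{\chi}\colon \mathbb A_{\gg} \to \overline{\GG}\big\backslash \GG^{\mathbb C}$ has maximal rank. Since $\chi_{\GG}(\qqq)$ lies in $\overline{\GG}$, formula \eqref{expr} gives $\pi(\gamma_{\chi}(\qqq))=\pi(\mathrm{exp}(i\chi_{\gg}(\qqq)))$, that is, $\pi \circ \gamma_{\chi}=(\pi \circ \gamma_{\mathrm{st}})\circ \chi_{\gg}$. For $\GG$ compact the ordinary polar map \eqref{polardec} is a diffeomorphism, so $\pi \circ \gamma_{\mathrm{st}}$ is a diffeomorphism onto $\overline{\GG}\big\backslash \GG^{\mathbb C}$; as $\chi_{\gg}$ is a local diffeomorphism, the composite $\pi \circ \gamma_{\chi}$ is a local diffeomorphism, in particular of maximal rank. (Equivalently, by \eqref{id4} the imaginary part $\llambda_{\chi}=\mathrm{Ad}_{\chi_{\GG}}(\llambda_{\mathrm{st}}\circ d\chi_{\gg})$ has maximal rank everywhere, because $\llambda_{\mathrm{st}}$, i.~e. the endomorphism $\tfrac{\mathrm{sin}(\mathrm{ad}(\qqq))}{\mathrm{ad}(\qqq)}$ of $\gg$, is invertible for every $\qqq\in \mathbb A_{\gg}$ when $\gg$ is compact, while $\mathrm{Ad}_{\chi_{\GG}}$ and $d\chi_{\gg}$ are isomorphisms.) Hence $\gamma_{\chi}$ is an admissible map and it induces $J=J_{\gamma_{\chi}}$.

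Next I would observe that, by the very construction of $\gamma_{\chi}$---and explicitly by Proposition \ref{chiexp}, cf. \eqref{id3} and \eqref{id4}---the two $\gg$-valued $1$-forms on $\mathbb A_{\gg}$ that Theorem \ref{kahlertco} attaches to $J=J_{\gamma_{\chi}}$, namely $\ppsi_J$ and $\llambda_J$, are precisely $\ppsi_{\chi}$ and $\llambda_{\chi}$. Feeding this into the equivalence of (i) and (ii) in Theorem \ref{kahlertco}, applied with $\gamma_J=\gamma_{\chi}$, I conclude that $J$ and $\oo$ combine to a pseudo K\"ahler structure on $\GG \times \mathbb A_{\gg}$ if and only if the two real $1$-forms $\langle \mu,\ppsi_{\chi}\rangle$ and $\langle \mu,\llambda_{\chi}\rangle$ on $\mathbb A_{\gg}$ are closed. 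Finally, by Remark \ref{positivity} this pseudo K\"ahler structure is automatically an ordinary (positive) K\"ahler structure, since it is induced from the standard K\"ahler structure on $\GG^{\mathbb C}$ through the generalized polar map $\Pi_{\gamma_{\chi}}$. This establishes the corollary, and one also reads off that the metric is given by \eqref{rmetric} and that an integral of $\langle\mu,\llambda_{\chi}\rangle$ furnishes a K\"ahler potential.

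I do not anticipate a real obstacle: with Theorem \ref{kahlertco} and Proposition \ref{chiexp} available, the proof is essentially bookkeeping. The only step that is not purely formal is the verification in the second paragraph that $\gamma_{\chi}$ does define an admissible complex structure, and there it is essential to use both the compactness of $\GG$, via the invertibility of $\tfrac{\mathrm{sin}(\mathrm{ad}(\qqq))}{\mathrm{ad}(\qqq)}$, and the assumption that $\chi_{\gg}$ is a local diffeomorphism.
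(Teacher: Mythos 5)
Your proposal is correct and follows essentially the same route as the paper, which presents Corollary \ref{cor1} as an immediate consequence of Theorem \ref{kahlertco} together with Proposition \ref{chiexp} (identifying $\ppsi_J$, $\llambda_J$ with $\ppsi_{\chi}$, $\llambda_{\chi}$) and Remark \ref{positivity} for the positivity upgrade. Your explicit verification that $\gamma_{\chi}$ is admissible---via the invertibility of $\tfrac{\mathrm{sin}(\mathrm{ad}(\qqq))}{\mathrm{ad}(\qqq)}$ for compact $\gg$ and the local-diffeomorphism hypothesis on $\chi_{\gg}$---is a detail the paper leaves implicit, but it is accurate and consistent with the paper's framework.
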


\subsection{Non-standard examples of biinvariant K\"ahler structures}
\label{nontrivial}
For the special case where $\chi_{\GG}$ has constant value $e$,
the conditions in Corollary \ref{cor1} take the form
$
d\langle \mu,\ppsi_{\mathrm{st}}\circ d\chi_{\gg}\rangle
=0$
and
$
d\langle \mu,\llambda_{\mathrm{st}}\circ d\chi_{\gg}\rangle
=0$.
The $\gg$-valued 1-forms $\ppsi_{\mathrm{st}}\circ d\chi_{\gg}$ 
and
$\llambda_{\mathrm{st}}\circ d\chi_{\gg}$
have the form
\[
\begin{CD}
\T \mathbb A_{\gg}
@>{d\chi_{\gg}}>>
\T \mathbb A_{\gg}
@>{\ppsi_{\mathrm{st}}}>> \gg
\\
\T \mathbb A_{\gg}
@>{d\chi_{\gg}}>>
\T \mathbb A_{\gg}
@>{\llambda_{\mathrm{st}}}>> \gg.
\end{CD}
\]
Let $\qqq \in \mathbb A_{\gg}$ and
$V\in \T \mathbb A_{\gg}\cong \gg$;
by construction,
\begin{align*}
(\ppsi_{\mathrm{st}})_\qqq(V)&=\frac{\mathrm{cos}(\mathrm{ad}(\qqq))-\mathrm{Id}}
{\mathrm{ad}(\qqq)} (V)
\\
(\llambda_{\mathrm{st}})_\qqq(V)&=\frac{\mathrm{sin}(\mathrm{ad}(\qqq))}
{\mathrm{ad}(\qqq)} (V).
\end{align*}
Hence
\begin{align*}
\langle \mu,\ppsi_{\chi}\rangle_\qqq(V)
&= \qqq\cdot((\ppsi_{\chi})_\qqq(V))
\\
&
=
-\tfrac 12
\qqq \cdot
([\chi_{\gg}(\qqq),(d\chi_{\gg})_\qqq(V)])
\\
&
\quad
+\tfrac 1{4!}
\qqq\cdot 
([\chi_{\gg}(\qqq), [\chi_{\gg}(\qqq),[\chi_{\gg}(\qqq), (d\chi_{\gg})_\qqq(V)]]])
\\
&
\quad \pm  \ldots \ .
\\
\langle \mu,\llambda_{\chi}\rangle_\qqq(V)
&= \qqq\cdot((\llambda_{\chi})_\qqq(V))
\\
&
=a\cdot ((d\chi_{\gg})_\qqq(V))-\tfrac 1{3!}
\qqq \cdot
([\chi_{\gg}(\qqq),[\chi_{\gg}(\qqq),(d\chi_{\gg})_\qqq(V)]]) \pm \ldots
\end{align*}
For example, when $[\chi_{\gg}(\qqq),\qqq]$
is zero for every $\qqq \in \gg$,
the 1-form $\langle \mu,\ppsi_{\chi}\rangle$ is even zero
rather than just closed,
and
\begin{align*}
\langle \mu,\llambda_{\chi}\rangle_\qqq(V)
&=
a\cdot ((d\chi_{\gg})_\qqq(V)).
\end{align*}

To construct non-standard examples,
the idea is now to rescale the identity of $\mathbb A_{\gg}$
by a $\GG$-invariant
scalar valued function on $\mathbb A_{\gg}$:
Let $\varphi\colon \mathbb R \to \mathbb R_{>0}$
be a smooth function of the (single) variable $x$ such that
the smooth function
\[
\chi\colon \mathbb R \longrightarrow \mathbb R,\ 
\chi(x)=x \varphi(x^2),
\]
is a
local diffeomorphism.
Notice that $\chi$ is then a diffeomorphism onto its image. 
We do not require that $\chi$ be onto.
Possible examples, beyond $\chi(x)=x$, are
$\chi(x)=\sinh(x)$ or $\chi(x)=\arctan(x)$.
Pick $\Phi$ such that
$
\Phi'= \tfrac 12 \varphi
$
and let  $\Xi(x)=\Phi(x^2)$ and 
\[
f(x)=x \chi(x)-\Xi(x)
=x^2 \varphi(x^2)-\Xi(x)
=x^2 \varphi(x^2)-\Phi(x^2);
\]
then
\begin{align*}
\Xi'(x)&=2x\Phi'(x^2)=x \varphi(x^2) =\chi(x).
\\
\chi'(x)&=2x^2\phi'(x^2)+\phi(x^2).
\\
f'(x)&=x \chi'(x).
\end{align*}
Define $\chi_{\gg}\colon \mathbb A_{\gg} \to \mathbb A_{\gg}$
by
\begin{equation}
\chi_{\gg}(\qqq)= \varphi(||\qqq||^2)\qqq.
\label{kind1}
\end{equation}
Then $[\chi_{\gg}(\qqq),\qqq]$
is zero for every $\qqq \in \gg$, whence
the 1-form $\langle \mu,\ppsi_{\chi}\rangle$ is zero.
Moreover
\begin{align*}
(d\chi_{\gg})_{\qqq}(V)&=2 \varphi'(||\qqq||^2) (\qqq \cdot V)\, \qqq 
+\varphi(||\qqq||^2)V
\\
\langle \mu,\llambda_{\chi}\rangle_\qqq(V)
&=
a\cdot ((d\chi_{\gg})_\qqq(V))
\\
&=
\left(2||\qqq||^2 \varphi'(||\qqq||^2)
+\varphi(||\qqq||^2)\right) \qqq \cdot V 
\\
&=
\chi'(||\qqq||)\, \qqq \cdot V .
\end{align*}
Now, define $F\colon \mathbb A_{\gg} \to \mathbb R$ by
\[
F(\qqq)=||\qqq||^2 \varphi(||\qqq||^2)-\Phi(||\qqq||^2)
=
||\qqq|| \chi(||\qqq||)-\Xi(||\qqq||),\ \qqq \in \mathbb A_{\gg}.
\]
Then
$dF=\langle \mu,\llambda_{\chi}\rangle$
whence, in particular, $\langle \mu,\llambda_{\chi}\rangle$ is closed.
The resulting complex structure $J$ on $\GG \times \mathbb A_{\gg}$
is biinvariant and combines with the symplectic structure
$\oo\, (=-d\langle \mu, \theta_{\GG}\rangle)$, the tautological structure
with respect to the chosen innner product on $\gg$,
to a biinvariant K\"ahler structure.

As a consistency check we note that,
in the special case where the function $\varphi$ has constant value $1$,
\begin{align*}
\chi(x)= x,\ 
\Xi(x)= \tfrac 12 x^2,\ 
f(x)= x^2-\tfrac 12 x^2=\tfrac 12 x^2,
\
F(\qqq)&= \tfrac 12 \qqq \cdot \qqq .
\end{align*}
These are the corresponding identities in the standard case,
cf. \eqref{funct1}.

For example, with $\varphi(x^2)=\tfrac{\arctan(x)}x$,
the resulting map $\chi_{\gg}\colon \mathbb A_{\gg} \to \mathbb A_{\gg}$
given by $\chi_{\gg}(\qqq)=\varphi(||\qqq||^2)\, \qqq$ ($\qqq \in \mathbb A_{\gg}$)
is a  diffeomorphism onto its image but is not onto.
The resulting generalized polar map $\Pi\colon \GG \times \mathbb A_{\gg}
\to \GG^{\mathbb C}$ is then a biinvariant
K\"ahler diffeomorphism
onto a proper open subset of
$\GG^{\mathbb C}$ (endowed with the standard structure); 
in particular,  $\Pi$ is not onto.

More generally, 
we can rescale the identity of $\mathbb A_{\gg}$
with a smooth function
$\varphi(i_1(\cdot),\ldots,i_{\ell}(\cdot))$
of the invariants $i_1,\ldots,i_{\ell}$ 
of $\gg$, that is,
consider a local diffeomorphism
 $\chi_{\gg}\colon \mathbb A_{\gg} \to \mathbb A_{\gg}$
of the kind  
\begin{equation}
\chi_{\gg}(\qqq)=\varphi(i_1(\qqq),\ldots,i_{\ell}(\qqq)) \qqq,\ 
\qqq \in \mathbb A_{\gg}.
\label{kind2}
\end{equation}

This class of examples raises the following question:
Suppose that $G$ is simple.
Are there biinvariant K\"ahler structures on $\GG \times \mathbb A_{\gg}$
having underlying symplectic structure the tautological one
(relative to an invariant inner product on $\gg$)
that are distinct from those arising from
rescaling the identity?

\section*{Acknowledgement}
The authors are indebted to R. Bielawski, for his paper \cite{MR1989647},
which was a great source of inspiration,
as well as for some email correspondence. 
The authors also profited from email correspondence with R. Sz\"oke.
Support by the CNRS and by the
Labex CEMPI (ANR-11-LABX-0007-01)
is gratefully acknowledged.

\addcontentsline{toc}{section}{References}
\def\cprime{$'$} \def\cprime{$'$} \def\dbar{\leavevmode\hbox to 0pt{\hskip.2ex
  \accent"16\hss}d} \def\cprime{$'$} \def\cprime{$'$} \def\cprime{$'$}
  \def\cprime{$'$} \def\cprime{$'$}
  \def\polhk#1{\setbox0=\hbox{#1}{\ooalign{\hidewidth
  \lower1.5ex\hbox{`}\hidewidth\crcr\unhbox0}}}

\end{document}